\newcommand{\width}{{\rm width}}
\newcommand{\R}{\mathbb{R}}
\newcommand{\sfd}{{\sf d}}
\renewcommand{\d}{{\mathrm d}}
\newcommand{\restr}[1]{\lower3pt\hbox{$|_{#1}$}}
\newcommand{\limi}{\varliminf}
\newcommand{\lims}{\varlimsup}
\newcommand{\X}{{\rm X}}
\newcommand{\Y}{{\rm Y}}
\newcommand{\lip}{{\rm lip}}
\newcommand{\Lip}{{\rm Lip}}
\newcommand{\loc}{{\rm loc}}
\newcommand{\mm}{\mathfrak m}
\renewcommand{\SS}{{\rm SS}}
\newcommand{\SR}{{\rm SR}}
\newcommand{\pos}{{\rm pos}}
\newcommand{\mytag}[2]{%
  \text{#1}%
  \@bsphack
  \begingroup
    \@onelevel@sanitize\@currentlabelname
    \edef\@currentlabelname{%
      \expandafter\strip@period\@currentlabelname\relax.\relax\@@@%
    }%
    \protected@write\@auxout{}{%
      \string\newlabel{#2}{%
        {\color{black}#1}%
        {\thepage}%
        {\@currentlabelname}%
        {\@currentHref}{}%
      }%
    }%
  \endgroup
  \@esphack
}
\def\Xint#1{\mathchoice
{\XXint\displaystyle\textstyle{#1}}%
{\XXint\textstyle\scriptstyle{#1}}%
{\XXint\scriptstyle\scriptscriptstyle{#1}}%
{\XXint\scriptscriptstyle\scriptscriptstyle{#1}}%
\!\int}
\def\XXint#1#2#3{{\setbox0=\hbox{$#1{#2#3}{\int}$ }
\vcenter{\hbox{$#2#3$ }}\kern-.6\wd0}}
\def\dashint{\Xint-}
\newtheorem{theorem}{Theorem}[section]
\newtheorem{proposition}[theorem]{Proposition}
\theoremstyle{definition}
\newtheorem{definition}[theorem]{Definition}
\newtheorem{example}[theorem]{Example}
\newcounter{Counter}
\newcommand{\Int}{\textup{int}}
\newcommand{\codH}[1]{\mathcal{H}^{{\rm cod-}{#1}}}
\title[Geometric characterizations of ${\sf PI}$ spaces]{Geometric characterizations of ${\sf PI}$ spaces: an overview of some modern techniques}
\author[Emanuele Caputo]{Emanuele Caputo}\address[Emanuele Caputo]{Mathematics Institute, Zeeman Building, University of Warwick, Coventry, CV4 7AL, United Kingdom}\email{emanuele.caputo@warwick.ac.uk}
\keywords{Poincar\'{e} inequality, Separating sets, Perimeter, Minkowski content, metric measure spaces}
\subjclass[2020]{30L15, 53C23, 49J52}
\begin{document}

\begin{abstract}
    We survey recent results on the study of metric measure spaces satisfying a Poincar\'{e} inequality.
    We overview recent characterizations in terms of objects of dimension 1, such as pencil of curves, modulus estimates and obstacle-avoidance principles.
    Then, we turn our attention to characterizations in terms of objects of codimension 1, such as relative isoperimetric inequalities and separating sets, the last one obtained in collaboration with N. Cavallucci in \cite{CaputoCavallucci2024}. We propose a strategy to provide examples using our characterization in the toy-model of the Euclidean case. We also discuss a more geometric relation between separating sets and obstacle-avoidance principles, obtained in \cite{CaputoCavallucci2024II}.Finally, we recall some open questions in the field.
\end{abstract}

\maketitle


\section{Introduction}

In recent works with N. Cavallucci \cite{CaputoCavallucci2024,CaputoCavallucci2024II, CaputoCavallucci2024III}, we obtained new characterizations of doubling metric measure spaces satisfying the $1$-Poincar\'{e} inequality in terms of a lower bound on the total amount of suitable energies of boundaries of separating sets.

This survey has a dual goal. On one side, we present our formalism together with the discussion of the main results. On the other side, we review recent techniques in metric analysis related to the study of metric measure spaces that are doubling and satisfy a $1$-Poincar\'e inequality, called ${\sf PI}$ spaces. We assume basic familiarity with metric analysis and geometric measure theory.

A metric measure space is a triple $(\X,\sfd,\mm)$, where $(\X,\sfd)$ is a complete and separable metric space and $\mm$ is a nonnegative Borel measure that is finite on bounded sets and such that $\mm(\X)>0$. The underlying standing assumption is that the measure $\mm$ is doubling, i.e. there exists a constant $C_D \ge 1$ such that 
\begin{equation*}
    0<\mm(B_{2r}(x))\le C_D \mm(B_r(x))< \infty \quad\text{for every }x \in \X,\, r>0.
\end{equation*}

Here we denote $B_r(x):=\{ y \in \X:\, \sfd(y,x)< r \}$.
We introduce the notion of a ${\sf PI}$ space. To formulate this condition, we need the definition of local Lipschitz constant, see Section \ref{sec:lipschitz_functions}.

\begin{definition}[${\sf PI}$ space]
    A metric measure space $(\X,\sfd,\mm)$ satisfies a (weak) $1$-Poincar\'{e} inequality if there exists $C_{P} \ge 1$ and $\lambda \ge 1$ such that 
    \begin{equation}
    \label{pPoincare-lip}
        \dashint_{B_r(x)} \left|u - \dashint_{B_r(x)}u\,\d \mm\right|\,\d \mm \le C_P r\, \dashint_{B_{\lambda r}(x)} \lip u\,\d \mm
    \end{equation}
    for every $u \in {\rm Lip}(\X)$.
    A metric measure space is called a ${\sf PI}$ space if it is doubling and satisfies a $1$-Poincar\'{e} inequality.
\end{definition}
The Poincar\'{e} inequality can be equivalently formulated by means of Sobolev functions, replacing the class of Lipschitz functions and with a suitable replacement of the local Lipschitz constant, but we avoid their usage in this work.

We restrict our attention from the class of all metric measure spaces to the one of ${\sf PI}$ spaces for several reasons. A non-exhaustive list is given by the following one.
\begin{itemize}
    \item \emph{A theory of differentiation of Lipschitz functions}. 
    Rademacher's theorem is a classical theorem in geometric measure theory which states that every Lipschitz function $f\colon \mathbb{R}^d \to \mathbb{R}$ is differentiable a.e.\ with respect to the Lebesgue measure.
    The study of differentiability of Lipschitz functions in a more general base space (that replaces $\mathbb{R}^d$ in the previous statement) is still a very active area of research.
    Cheeger proved in \cite{Cheeger99} that ${\sf PI}$ spaces give a rich class of examples where a version of Rademacher's theorem holds, for a suitable notion of differential of Lipschitz functions.
    
    Given $(\X,\sfd,\mm)$ a ${\sf PI}$ space, there exists a Borel partition $\{U_i\}_i$ of $\X$ (up to $\mm$-negligible set) and Lipschitz $\varphi_i \colon \X \to \mathbb{R}^{N_i}$ such that for every Lipschitz function $f \colon \X \to \mathbb{R}$, there exists a function $\d f\colon U_i \to {\mathbb{R}^{N_i}}^*$ such that
        \begin{equation*}
            \limsup_{y \to x}\frac{|f(y)-f(x)-\d f_x(\varphi_i(y)-\varphi_i(x))|}{\sfd(x,y)} =0,\quad \text{for }\mm\text{-a.e.\ }x \in U_i.
        \end{equation*}
    \item \emph{Regularity of harmonic functions}.
    The possibility of defining a differential $\d f_x$ of a Lipschitz function $f$ at a point $x$ in a ${\sf PI}$ space allows to speak about harmonic functions in the sense of distributions. These functions are usually denoted as Cheeger-harmonic functions and satisfy a Caccioppoli-type inequality. The coupling of the Poincar\'{e}-inequality together with a Caccioppoli-inequality is a key ingredient for Koskela-Rajala-Shanmugalingam in \cite{KoskelaRajalaShanmugalingam2003} in proving Lipschitz regularity of Cheeger-harmonic functions. Other additional regularity assumptions on the space are needed, but we do not want to stress this point further.
\end{itemize}

Although the definition of ${\sf PI}$ spaces is analytical, it can be characterized in purely geometric terms. This shows a nice interplay between techniques in analysis and geometry when studying ${\sf PI}$ spaces. Moreover, it may be difficult to verify that a given space is a ${\sf PI}$ space by directly applying the definition. Therefore, certain geometric characterizations are more suitable for the geometry of certain class of spaces and they are a more handy tool to prove they are ${\sf PI}$ spaces.

A recurrent rule of thumb is that this class of spaces satisfies two geometric conditions, that we now formulate in very informal terms:
\begin{itemize}
    \item \emph{Quantitative connectivity (dimension 1)}. Every pair of points can be connected by `many' rectifiable paths.
    \item \emph{Absence of bottlenecks (codimension 1)}. Every pair of points can be separated by a sufficiently large hypersurface.
\end{itemize}

The main goal of this note is to make these intuitions precise and to explain some characterizations related to them. We can formulate both conditions in a way that they characterize when a doubling metric measure space is a ${\sf PI}$ space. 

The first property can be precisely formulated either in terms of bounds on the modulus of paths connecting points or in terms of a probability measures on paths connecting points with certain properties. In this context, we also recall the obstacle-avoidance principle.  This will be the content of Section \ref{sec:dimension_1}.

The second property can be formulated either in terms of relative isoperimetric inequality or in terms of the surface area of separating sets, weighted with a Riesz kernel. This will be the content of Section \ref{sec:codimension_1}.

Before going into the aforementioned characterizations, one may wonder how large the class of ${\sf PI}$ spaces is. Relevant classes of examples are:
\begin{itemize}
    \item[1)] \emph{Smooth connected complete closed Riemannian manifolds} with Ricci curvature greater or equal than $0$ \cite{Buser1982}.
    \item[2)] \emph{Metric measure spaces satisfying the curvature-dimension condition}. This class generalizes 1), was defined and studied in \cite{SturmI,SturmII,LottVillani09}. It includes Ricci limit spaces and is compact and stable with respect to (pointed) measured Gromov-Hausdorff convergence. ${\sf CD}(0,N)$ spaces with $N \in (1,\infty)$ are ${\sf PI}$ spaces, as proved in \cite{Rajala12}.
    \item[3)] \emph{Sub-Riemannian geometries}. These are smooth manifolds endowed with a Riemannian metric which takes infinite value on some directions on the tangent spaces. 
    Examples of ${\sf PI}$ spaces in this class are the Heisenberg group and, more generally, any Carnot group equipped with the Lebesgue measure and the Carnot–Carathéodory metric (see \cite[Proposition 11.17]{HK00}, after \cite{Varopoulus87}). 
    \item[4)] \emph{Non self-similar Sierpi\'{n}ski carpet} \cite{MackayTysonWildrick2013}. The non self-similar Sierpi\'{n}ski carpet is a modification of the classical construction of the Sierpi\'{n}ski carpet in the Euclidean plane. These are constructions that can be thought as a fattening of the Sierpi\'{n}ski carpet by removing smaller and smaller squares at each stage (see \cite{Sylvester-Gong-21} for an alternative proof and for a generalization of the result to higher dimensions).
    \item[5)] \emph{Laakso spaces}  \cite{Laakso2000}. These are a family of metric measure spaces parametrized by $1 \le Q\in \mathbb{R}$ that have Hausdorff dimension $Q$ and do not biLipschitz embed in any Euclidean space. 
    See also \cite[Theorem 2.3]{LangPlaut2001} for an alternative construction.
\end{itemize}

We recall that the doubling condition can be generalized to a local doubling condition and that the Poincar\'{e} inequality can be formulated with a $p$-norm of $\lip u$ on the right-hand side with $p > 1$. This type of generalization allows for more flexibility in the previous class of examples, but we would like to keep the presentation simpler and focus on our axiomatization.

Other relevant contributions related to examples and characterization of  the $p$-Poincar\'{e} inequality for $p > 1$ can be found in \cite{ErikssonBique2019, Sylvester-Gong-21, KorteLahti2014, MackayTysonWildrick2013}.

\subsection{Structure of the overview}

Section \ref{sec:preliminaries} contains some preliminaries about analysis on metric spaces, Lipschitz functions, Riesz kernels and Poincar\'{e} inequality.

Section \ref{sec:dimension_1} surveys characterizations of ${\sf PI}$ spaces in terms of curves, as Keith's modulus estimates, pencil of curves and the obstacle-avoidance principle. We sketch the proofs of the existence of pencil of curves. Section \ref{sec:codimension_1} presents the characterizations in terms of objects of codimension 1. Here, we review isoperimetric inequalities and our contributions in \cite{CaputoCavallucci2024II} in terms of energy of separating sets. Here, we show how separating sets can be used to prove that a space is ${\sf PI}$, by tailoring the discussion to $\mathbb{R}^d$.

Section \ref{sec:a_more_geometric_characterization} discusses a direct proof of the equivalence of ${\sf PI}$ spaces, obstacle-avoidance principle with Riesz kernel and Minkowski content of separating sets, as proved in \cite{CaputoCavallucci2024II}.

We then review the main result of \cite{CaputoCavallucci2024II}, where a direct and more geometric relation between the obstacle-avoidance principle and separating sets can be achieved, without appealing to the Poincar\'{e} inequality. The argument is geometric and uses the so-called position function. We introduce this function in Section \ref{sec:position_function} and explain its properties and its relation to the above-mentioned equivalence. In Section \ref{sec:open}, we present some open problems.

\section*{Acknowledgements}

I am supported by the European Union’s Horizon
2020 research and innovation programme (Grant agreement No. 948021). I would like to thank Nicola Cavallucci, Francesco Nobili, Tapio Rajala and Pietro Wald for reading a preliminary version of the manuscript. I also thank Fabio Cavalletti, Matthias Erbar, Jan Maas and Karl-Theodor Sturm for the organization of the workshop `School and Conference on
Metric Measure Spaces, Ricci Curvature, and Optimal Transport' at
Villa Monastero, Lake Como and for the opportunity
to write a conference proceeding of the event.

\section{Preliminaries}

\label{sec:preliminaries}
Given a metric space $(\X,\sfd)$, we denote by $\mathscr{B}(\X) \subset 2^\X$ the family of Borel subsets of $\X$. 
We denote by $\mathcal{P}(\X)$ the convex set of nonnegative Borel probability measures on $(\X,\sfd)$.
We define $B_r(x):=\{y \in \X\,:\,\sfd(x,y) < r\}$ and $\overline{B}_r(x):=\{y \in \X\,:\,\sfd(x,y) \le r\}$.
Given a metric space $(\X,\sfd)$ and $k \in \mathbb{N}$, we denote by $\mathcal{H}^k$ the $k$-dimensional Hausdorff measure. We denote by $\mathcal{L}^d$ the Lebesgue measure in $\mathbb{R}^d$.

\subsection*{Curves}
We denote by $C([0,1],\X)$ the space of continuous curves from $[0,1]$ with values in $\X$. We endow it with the distance $\sfd_{\infty}(\gamma,\eta):={\rm sup}_{t \in [0,1]}\sfd(\gamma(t),\eta(t))$.

We define the length of a curve $\gamma \in C([0,1],\X)$ as
\begin{equation*}
    \ell(\gamma):= \sup \left\{ \sum_{i=0}^{N-1} \sfd(\gamma(t_i), \gamma(t_{i+1})) \,:\, N \in \mathbb{N},\,
        0=t_0 < t_1 <\dots < t_N =1\right\}.
\end{equation*}

A curve $\gamma \in C([0,1],\X)$ is said to be rectifiable if $\ell(\gamma)<\infty$. Every rectifiable curve admits a Lipschitz reparametrization, i.e.\ there exists an increasing map $\phi \colon [0,1] \to [0,1]$ with $\phi(0)=0$ and $\phi(1)=1$ such that $\tilde{\gamma}:=\gamma \circ \phi$ is $\ell(\gamma)$-Lipschitz. We say that $\gamma \in C([0,1],\X)$ is a geodesic if $\ell(\gamma)=\sfd(\gamma(0),\gamma(1))$.

Given $L \ge 1$, we say that $\gamma \in C([0,1],\X)$ is an $L$-quasigeodesic if $\ell(\gamma) \le L \sfd(\gamma(0),\gamma(1))$. We say that it is an $L$-quasigeodesic connecting $x,y \in \X$ if it is an $L$-quasigeodesic and $\gamma(0)=x$ and $\gamma(1)=y$. We denote the set of $L$-quasigeodesics connecting $x$ to $y$ with $\Gamma_{x,y}^L$. The notion of quasigeodesic quantifies how far a curve is from being a geodesic (notice that it always holds that $\sfd(\gamma(0),\gamma(1))\le \ell(\gamma)$ for $\gamma \in C([0,1],\X)$ by the very definition of $\ell(\gamma)$). We say that a metric space is $L$-quasiconvex if for every couple of points $x,y\in \X$ there exists an $L$-quasigeodesic connecting $x$ to $y$. Similarly, we define the set of rectifiable curves connecting $x$ to $y$ and we denote it by $\Gamma_{x,y}$.

For every Lipschitz curve $\gamma\colon [0,1] \to \X$ the function $|\dot{\gamma}(\cdot)|\colon [0,1] \to [0,\infty)$ defined as $|\dot{\gamma}(t)|:=\lim_{h \to 0}\frac{\sfd(\gamma(t+h),\gamma(t))}{|h|} \in [0,\infty)$ is almost everywhere well-defined and it is called the metric speed of $\gamma$ (\cite[Theorem 1.1.2]{AmbrosioGigliSavare}).

Given a Borel function $g \colon \X \to [0,\infty)$ and $\gamma \colon [0,1]\to \X$ Lipschitz we define
\begin{equation*}
    \int_\gamma g\,\d s := \int_0^1 g(\gamma(t))\,|\dot{\gamma}(t)|\,\d t.
\end{equation*}
This quantity does not depend on the Lipschitz reparametrizations of $\gamma$.
With some abuse of notation, we write $\int_\gamma g\,\d s$ when $\gamma$ is rectifiable and we mean $\int_\gamma g\,\d s:=\int_{\tilde{\gamma}} g\,\d s$, where $\tilde{\gamma}$ is a Lipschitz reparametrization of $\gamma$.
Given a Borel set $A\subset \X$ and a rectifiable curve $\gamma \in C([0,1],\X)$, we define
\begin{equation*}
    \ell(\gamma\cap A):=\int_{\gamma} \chi_A \,\d s.  
\end{equation*}

\subsection*{Lipschitz functions}
\label{sec:lipschitz_functions}

The \emph{local Lipschitz constant} $\lip u \colon \X \to \mathbb{R}$ of a function $u\colon \X \to \R$ is
$$\lip u (x):=\lims_{y \to x} \frac{|u(y)-u(x)|}{\sfd(y,x)} = \lim_{\delta \to 0} \sup_{y\in B_\delta(x)\setminus \{x\}} \frac{|u(y)-u(x)|}{\sfd(y,x)} $$ 
with the convention that $\lip u (x)=0$ if $x$ is an isolated point. If $u$ is locally Lipschitz, we have that for every rectifiable $\gamma$
\begin{equation}
\label{eq:lipu_is_an_uppergradient}
    |u(\gamma(1))-u(\gamma(0))| \le \int_\gamma \lip u \,\d s.
\end{equation}
The above implies that $\lip u$ is an upper gradient of $u$ (see \cite[Lemma 6.2.6]{HKST15}), where an upper gradient is defined as follows. A Borel function $g\in \X \to [0,\infty]$ is an upper gradient of a function $u \colon \X \to \mathbb{R}$ if
$$|u(\gamma(1))-u(\gamma(0))|\le \int_\gamma g \,\d s$$
for every rectifiable curve $\gamma$.

\subsection*{Riesz kernel}
\label{sec:Riesz_kernel}

We define one of the central objects of this survey: the \emph{Riesz potential} or \emph{Riesz kernel}. Given $x,y\in \X$, the \emph{Riesz potential} $R_{x,y} \colon \X \to [0,\infty)$ with poles at $x$ and $y$ is defined for every $z \in \X\setminus \lbrace x,y \rbrace$ as
\begin{equation}
\begin{aligned}
    R_{x,y}(z):&= \frac{\sfd(x,z)}{\mm(B_{\sfd(x,z)}(x))} + \frac{\sfd(y,z)}{\mm(B_{\sfd(y,z)}(y))}=: R_x(z)+R_y(z).
\end{aligned}
\end{equation}
Moreover, we define $R_{x,y}(x) = R_{x,y}(y) = 0$. For $L\geq 1$ we set $B_{x,y}^L := B_{2L\sfd(x,y)}(x) $ and $\overline{B}_{x,y}^L := \overline{B}_{2L\sfd(x,y)}(x) $. The $L$-\emph{truncated Riesz potential with poles at $x,y$} is
\begin{equation}
    R_{x,y}^L(z):= \chi_{B_{x,y}^L}(z) R_{x,y}(z)
\end{equation}
for every $z \in \X\setminus \lbrace x,y \rbrace$. The corresponding Riesz measure is defined as
\begin{equation}
    \mm_{x,y}^L = R^L_{x,y}\,\mm. 
\end{equation}
It is a measure on $\X$ which is supported on $\overline{B}_{x,y}^L$. This measure has been already studied for instance in \cite{Hei01} and \cite{Kei03}. 
We list some additional properties of the Riesz kernel and of the measure $\mm_{x,y}^L$, proven in \cite[Proposition 2.3 and Lemma 2.4]{CaputoCavallucci2024}.

Given a $C_D$-doubling metric measure space $(\X,\sfd,\mm)$ and $x,y\in \X$ and $L\geq 1$. We have that $\mm_{x,y}^L(\X) \leq 8C_DL\sfd(x,y)$. In particular $\mm_{x,y}^L$ is a finite Borel measure.

Moreover, if $(\X,\sfd)$ is geodesic, then the map
    $$\mathcal{D}(R) \to [0,+\infty], \quad (x,y,z) \mapsto R_{x,y}(z)$$
    is continuous, where $\mathcal{D}(R) := \lbrace (x,y,z)\in \X : x,y,z\text{ distinct}\rbrace$.

\subsection{Pointwise estimates}
\label{sec:pointwise_estimates}

We can now state two equivalent formulations of the Poincaré inequality on doubling metric measure spaces, as in \cite{Hei01}. We refer the reader to \cite[Theorem A.3]{CaputoCavallucci2024}.
\begin{proposition}[{\cite[Theorem 9.5]{Hei01}}]
\label{prop:Poincaré_equivalences_pointwise}
    Let $(\X,\sfd,\mm)$ be a doubling metric measure space. The following are quantitatively equivalent:
    \begin{itemize}
        \item[(PI)] $(\X,\sfd,\mm)$ is a ${\sf PI}$ space;
        \item[(PtPI)] $\exists C > 0$, $L\geq 1$ such that for every $x,y \in \X$ and every $u\in \Lip(\X)$ it holds
        \begin{equation}
        \label{eq:Riesz_PtPI}
            |u(x)-u(y)|\le C\, \int_\X \lip u\,\d \mm_{x,y}^{L}.
        \end{equation} 
    \end{itemize}
    The same inequality equivalently holds for the couple $(u,g)$ replacing $(u, \lip u)$, where $g$ is an upper gradient of $u$.
\end{proposition}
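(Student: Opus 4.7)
The plan is to prove the two implications separately, with (PtPI)$\Rightarrow$(PI) being a fairly direct Fubini-and-doubling computation and (PI)$\Rightarrow$(PtPI) being a chain-of-balls telescoping argument.

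For (PtPI)$\Rightarrow$(PI), I would start from the symmetric bound
\begin{equation*}
\dashint_{B_r(x)} \left|u - \dashint_{B_r(x)} u\,\d\mm\right|\d\mm \le \dashint_{B_r(x)}\dashint_{B_r(x)} |u(y)-u(z)|\,\d\mm(y)\d\mm(z),
\end{equation*}
then apply (PtPI) to $|u(y)-u(z)|$, yielding a double integral of $\int_\X \lip u\,\d\mm_{y,z}^L$. Since $y,z\in B_r(x)$ one has $B_{y,z}^L \subset B_{(1+4L)r}(x)$, so the integrand is supported in $B_{(1+4L)r}(x)$. Swapping the order of integration via Fubini, it reduces to showing that for $\mm$-a.e.\ $w\in B_{(1+4L)r}(x)$,
\begin{equation*}
\dashint_{B_r(x)}\dashint_{B_r(x)} R^L_{y,z}(w)\,\d\mm(y)\d\mm(z) \le C\,\frac{r}{\mm(B_{(1+4L)r}(x))}.
\end{equation*}
This is a purely geometric estimate that follows from the doubling property, by comparing $\mm(B_{\sfd(y,w)}(y))$ with $\mm(B_r(x))$ through finitely many doublings (the dilation constant in the resulting $1$-Poincaré inequality will be $\lambda = 1+4L$).

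For (PI)$\Rightarrow$(PtPI), the standard strategy is a chain-of-balls construction. Fix distinct $x,y\in \X$ and let $d := \sfd(x,y)$. Using the doubling hypothesis (and an auxiliary quasiconvexity/maximal function argument of Hajłasz–Koskela type, as in Heinonen's book) I would build a sequence of balls $\{B_i = B(x_i, r_i)\}_{i\in\Z}$ with $r_i \simeq 2^{-|i|}d$, such that $B_i \to \{x\}$ as $i\to -\infty$ and $B_i \to \{y\}$ as $i\to +\infty$, with consecutive balls comparable in the sense $B_i \cup B_{i+1} \subset C B_i \cap C B_{i+1}$, and with each $\lambda B_i$ contained in $B^L_{x,y}$ for some $L=L(C_D,C_P,\lambda)$. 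Applying (PI) on each $B_i$ and using doubling,
\begin{equation*}
|u_{B_i} - u_{B_{i+1}}| \le C\, r_i \dashint_{\lambda B_i} \lip u\,\d\mm.
\end{equation*}
Telescoping and passing to the limit as $i\to \pm\infty$ (using Lebesgue differentiation, which holds on doubling spaces, together with the continuity of $u$) gives $|u(x)-u(y)| \le C\sum_i r_i \dashint_{\lambda B_i} \lip u\,\d\mm$.

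The final step, which I expect to be the main technical obstacle, is to recognize the sum as a Riesz integral. Swapping sum and integral gives
\begin{equation*}
\sum_i r_i \dashint_{\lambda B_i}\lip u\,\d\mm = \int_\X \lip u(w)\sum_i \frac{r_i\,\chi_{\lambda B_i}(w)}{\mm(\lambda B_i)}\,\d\mm(w),
\end{equation*}
so the task reduces to the pointwise kernel bound
\begin{equation*}
\sum_i \frac{r_i\,\chi_{\lambda B_i}(w)}{\mm(\lambda B_i)} \le C\,R^L_{x,y}(w) \quad\text{for $\mm$-a.e.\ }w.
\end{equation*}
The heart of the matter is that if $w\in \lambda B_i$ with $x_i$ close to $x$, then $\sfd(x,w)\lesssim r_i$, and doubling gives $\mm(\lambda B_i)\simeq \mm(B_{\sfd(x,w)}(x))$, so the corresponding term is controlled by $R_x(w)$; geometric decay of $r_i$ produces a geometric series bounded by $R_x(w)$. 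The symmetric argument handles balls near $y$ via $R_y(w)$. The truncation by $\chi_{B^L_{x,y}}$ is automatic because $\lambda B_i\subset B^L_{x,y}$ by construction. Combining the telescoping estimate with this kernel bound yields (PtPI). The quantitative dependence of the constants on $C_D, C_P, \lambda$ at each step can be tracked explicitly, giving the "quantitative" equivalence in the statement.
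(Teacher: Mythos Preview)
The paper does not give its own proof of this proposition: it is stated as a citation of \cite[Theorem~9.5]{Hei01} (with a further reference to \cite[Theorem~A.3]{CaputoCavallucci2024}), and is used throughout as a black box. So there is no ``paper's proof'' to compare against.

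That said, your outline is exactly the classical argument from Heinonen's book and is correct in substance. Two small remarks. In the direction (PtPI)$\Rightarrow$(PI), the bound you write for the averaged kernel is stated with $\mm(B_{(1+4L)r}(x))$ in the denominator; what one actually gets from the dyadic annulus computation is $C r/\mm(B_r(x))$, and the passage to the larger ball costs an extra doubling constant---harmless, but worth being explicit about. In the direction (PI)$\Rightarrow$(PtPI), the chain of balls can be taken with centers at $x$ and at $y$ only (radii $2^{-i}d$), so no auxiliary quasiconvexity is needed; this slightly simplifies your sketch. The kernel comparison $\sum_i r_i\chi_{\lambda B_i}(w)/\mm(\lambda B_i)\le C R^L_{x,y}(w)$ is indeed the crux and follows by the geometric-series argument you indicate.
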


\section{Characterizations in terms of objects of dimension 1}
\label{sec:dimension_1}

As explained in the introduction, the Poincar\'{e} inequality is related to the fact that every couple of points can be connected by many curves. 

A first necessary condition is given by the following result. The argument is due to Semmes, see the proof in \cite[Appendix]{Cheeger99} (or the textbook \cite[Theorem 8.3.2]{HKST15}).

\begin{proposition}
Let $(\X,\sfd,\mm)$ be a ${\sf PI}$ space. Then there exists a constant $L \ge 1$ such that $(\X,\sfd)$ is $L$-quasiconvex.    
\end{proposition}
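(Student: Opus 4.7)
The plan is to apply the pointwise Poincar\'e inequality (PtPI) from Proposition \ref{prop:Poincaré_equivalences_pointwise} to a discrete approximation of the length functional, and then extract a continuous limit curve by a compactness argument. The required constant $L$ will come out to be $L = 8 C\, C_D\, L_0$, where $C, L_0$ are the constants in (PtPI) and $C_D$ is the doubling constant.

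First I would show that a ${\sf PI}$ space is connected: if $\X = A \sqcup B$ were a partition into disjoint nonempty open sets of positive measure, then $\chi_A$ would be locally constant and hence locally Lipschitz with $\lip \chi_A \equiv 0$, contradicting \eqref{pPoincare-lip} on any ball that meets both $A$ and $B$. A standard clopen argument then promotes connectedness to $\delta$-chain-connectedness for every $\delta > 0$: for fixed $x \in \X$, the set of points that can be reached from $x$ by a finite $\delta$-chain is simultaneously open and closed, hence equals $\X$. Fixing $x,y \in \X$ and $\delta>0$, I would then define the $\delta$-chain-length function
\begin{equation*}
    u_\delta(z) := \inf \left\{ \sum_{i=0}^{n-1} \sfd(z_i, z_{i+1}) : n \in \N,\ z_0 = x,\ z_n = z,\ \sfd(z_i, z_{i+1}) < \delta \right\}.
\end{equation*}
Appending one final step of length $<\delta$ shows that $u_\delta$ is $1$-Lipschitz, so $\lip u_\delta \le 1$ pointwise. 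Applying (PtPI) to $u_\delta$ together with the mass bound $\mm_{x,y}^{L_0}(\X) \le 8 C_D L_0 \sfd(x,y)$ recalled in Section \ref{sec:Riesz_kernel} yields
\begin{equation*}
    u_\delta(y) = |u_\delta(y) - u_\delta(x)| \le C \int_\X \lip u_\delta \, \d \mm_{x,y}^{L_0} \le 8 C C_D L_0\, \sfd(x,y) =: L\, \sfd(x,y),
\end{equation*}
a bound uniform in $\delta$.

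The remaining and most delicate step is extracting a continuous limit curve from nearly optimal chains. For each $n \in \N$ I would choose a $(1/n)$-chain $(x = z_0^n, \dots, z_{k_n}^n = y)$ of total length $\ell_n \le L\sfd(x,y) + 1/n$ and define the piecewise-constant parametrization $\gamma_n \colon [0,1] \to \X$ by $\gamma_n(t) = z_i^n$ for $t \in [s_i^n, s_{i+1}^n)$, where $s_i^n := \ell_n^{-1} \sum_{j < i} \sfd(z_j^n, z_{j+1}^n)$. A direct estimate gives the approximate equicontinuity bound $\sfd(\gamma_n(s), \gamma_n(t)) \le \ell_n|s-t| + 1/n$ for all $s,t \in [0,1]$. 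Since $(\X,\sfd)$ is doubling and complete, it is proper, so the closed ball of radius $L\sfd(x,y) + 1$ around $x$ is compact. A diagonal argument extracts a subsequence of $\gamma_n$ converging pointwise on $\Q \cap [0,1]$, and the approximate equicontinuity upgrades this to uniform convergence to a genuinely $L\sfd(x,y)$-Lipschitz limit curve $\gamma \colon [0,1] \to \X$ with $\gamma(0) = x$, $\gamma(1) = y$ and $\ell(\gamma) \le L\sfd(x,y)$, giving $\gamma \in \Gamma_{x,y}^{L}$ and thus $L$-quasiconvexity. The main obstacle is precisely this passage from discrete chains to a continuous curve: one has to be careful that $\gamma_n$ is not continuous, so a standard Arzel\`a--Ascoli appeal has to be replaced by the generalized version just sketched, which exploits the compactness coming from doubling plus completeness together with the $O(1/n)$ defect of equicontinuity.
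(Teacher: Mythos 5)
Your argument is essentially the Semmes chaining argument that the paper itself only cites (Cheeger's appendix, or \cite[Theorem 8.3.2]{HKST15}) rather than proves: a $\delta$-chain length function with upper gradient $1$, the pointwise Poincar\'e estimate to bound chain lengths by $C\,\mm_{x,y}^{L_0}(\X)\lesssim \sfd(x,y)$ uniformly in $\delta$, and a compactness argument to pass from near-optimal chains to a limit curve. One imprecision worth fixing: appending a final step only shows $u_\delta$ is $1$-Lipschitz at scales below $\delta$ — equivalently $\lip u_\delta\le 1$, and the constant function $1$ is an upper gradient of $u_\delta$ (subdivide any rectifiable curve into a $\delta$-chain) — but $u_\delta$ need not be globally $1$-Lipschitz a priori, so you should invoke the upper-gradient form of (PtPI) stated after Proposition \ref{prop:Poincaré_equivalences_pointwise} rather than the Lipschitz form; the same remark applies to testing \eqref{pPoincare-lip} with $\chi_A$ in the connectedness step, where it is cleaner to note that $0$ is an upper gradient of $\chi_A$ because a continuous curve cannot leave a clopen set.
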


First of all, this condition gives some topological constraint on the possibility of endowing a metric space with a Borel measure that makes it a ${\sf PI}$ space. It forces the topological space to be path-connected.
Moreover, it is odd to expect that such a necessary condition characterizes a ${\sf PI}$ space, as quasiconvexity is a purely metric condition and does not involve the measure $\mm$.

\subsection{Pencil of curves}

We refer the reader to Section \ref{sec:Riesz_kernel} for the preliminaries about the Riesz kernel. We define the core notion of this section and state the main theorem.

\begin{definition}[Pencil of curves \cite{Hei01,Semmes}]
      We say that a metric measure space $(\X,\sfd,\mm)$ admits a pencil of curves if there exist two constant $C_1>0$,$L\ge 1$ such that for every couple of points $x,y\in \X$ there exists $\alpha \in \mathcal{P}(\Gamma_{x,y}^L)$ such that 
      \begin{equation}
      \label{eq:pencil}
          \int \int_{\gamma} g\,\d s\,\d \alpha(\gamma) \le C_1 \int g R_{x,y}^L\,\d \mm
      \end{equation}
      for every nonnegative Borel function $g$.
\end{definition}

\begin{theorem}[Characterization 1, \cite{DurCarErikBiqueKorteShanmu21,FasslerOrponen19}]
\label{thm:characterization_1}
    A doubling metric measure space $(\X,\sfd,\mm)$ is a ${\sf PI}$ space if and only if it admits a pencil of curves.
\end{theorem}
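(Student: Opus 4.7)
My plan is to route the argument through Proposition \ref{prop:Poincaré_equivalences_pointwise}, which reduces $({\rm PI})$ to the pointwise bound $({\rm PtPI})$, and then handle the two implications separately: the forward direction is a one-line computation, whereas the reverse direction I would establish via a convex-duality / minimax argument that actually produces the probability measure $\alpha$ on $L$-quasigeodesics.

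For the implication \emph{pencil $\Rightarrow$ PI}, I would fix $x,y\in \X$ and $u\in \Lip(\X)$. Since $\lip u$ is an upper gradient of $u$ by \eqref{eq:lipu_is_an_uppergradient}, for every $\gamma \in \Gamma_{x,y}^L$ we have $|u(x)-u(y)| \le \int_\gamma \lip u \, \d s$. Integrating this inequality against the probability measure $\alpha$ provided by the pencil, and applying \eqref{eq:pencil} with $g=\lip u$, one obtains
\begin{equation*}
|u(x)-u(y)| \le \int_{\Gamma_{x,y}^L}\int_\gamma \lip u\,\d s \,\d \alpha(\gamma) \le C_1 \int_\X \lip u\, R_{x,y}^L\,\d \mm = C_1 \int_\X \lip u\,\d \mm_{x,y}^L,
\end{equation*}
which is exactly $({\rm PtPI})$, so Proposition \ref{prop:Poincaré_equivalences_pointwise} concludes.

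For the implication \emph{PI $\Rightarrow$ pencil}, I would proceed via minimax on $\mathcal{P}(\Gamma_{x,y}^L)$. First I would use that $(\X,\sfd)$ is quasiconvex (by Semmes) and proper (by doubling), so after Lipschitz reparametrization the elements of $\Gamma_{x,y}^L$ are uniformly Lipschitz; Arzel\`a--Ascoli then makes $\Gamma_{x,y}^L$ a compact subset of $(C([0,1],\X),\sfd_\infty)$, so $\mathcal{P}(\Gamma_{x,y}^L)$ is convex and compact in the weak topology. Next, Sion's minimax theorem applied to the bilinear functional
\begin{equation*}
F(\alpha,g) := \int_{\Gamma_{x,y}^L}\!\int_\gamma g\,\d s\,\d \alpha(\gamma) \, - \, C_1\int_\X g \,R_{x,y}^L\,\d \mm
\end{equation*}
on $\mathcal{P}(\Gamma_{x,y}^L) \times C_b(\X,[0,\infty))$ reduces the existence of $\alpha$ satisfying \eqref{eq:pencil} to the \emph{obstacle-avoidance principle}: for every continuous $g \geq 0$ there exists $\gamma_g \in \Gamma_{x,y}^L$ with $\int_{\gamma_g} g\,\d s \le C_1 \int_\X g\, R_{x,y}^L\,\d \mm$. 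To produce such a $\gamma_g$ from $({\rm PtPI})$, the natural Lipschitz test function is the $g$-weighted distance $u_g(z) := \inf \{\int_\sigma g\,\d s : \sigma \text{ rectifiable from } x \text{ to } z\}$: one checks that $\lip u_g \le g$ for continuous $g$, so $({\rm PtPI})$ yields $u_g(y) \le C\int g \,R_{x,y}^L\,\d \mm$. Then I would extract a near-minimiser $\sigma$ of the infimum defining $u_g(y)$ and upgrade it to an $L$-quasigeodesic $\gamma_g$.

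The hard part will be precisely this last upgrade. A near-minimiser of $\int_\sigma g\,\d s$ among all rectifiable curves need not satisfy any quasigeodesic constraint, and restricting the infimum from the outset to $\Gamma_{x,y}^L$ breaks the bound $\lip u_g \le g$ (concatenations of $L$-quasigeodesics are not $L$-quasigeodesic). This is the step where the existing proofs in \cite{DurCarErikBiqueKorteShanmu21, FasslerOrponen19} do most of the work: one iterates the construction across dyadic annuli of the truncated ball $B_{x,y}^L$, uses a Vitali-type selection of quasigeodesic segments at each scale, and passes to a weak limit of the associated probability measures. A careful quantitative use of doubling and of the Riesz weight is what ensures that the resulting curves concentrate inside $B_{x,y}^L$ and retain the $L$-quasigeodesic constraint in the limit.
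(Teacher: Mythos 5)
Your proposal follows essentially the same route as the paper's first sketch (the minimax argument of \cite{DurCarErikBiqueKorteShanmu21}): the forward direction by integrating the upper-gradient inequality against $\alpha$ to obtain (PtPI), and the reverse direction by feeding the $g$-weighted distance function into (PtPI) and then applying Sion's minimax theorem on the compact convex set $\mathcal{P}(\Gamma_{x,y}^L)$. The step you flag as the hard part --- upgrading near-minimisers among all rectifiable curves to genuine $L$-quasigeodesics so that the duality runs over $\Gamma_{x,y}^L$ --- is precisely the point the paper's sketch also elides and defers to the original references, so your account is faithful to the argument as presented.
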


The if part has been proved by Semmes in the 1990s and directly follows by the definition of upper gradient. The only if implication is more recent and there are two proofs of this implication, related to the two independent contributions \cite{DurCarErikBiqueKorteShanmu21,FasslerOrponen19}.
\begin{itemize}
    \item[1)] \emph{Sketch of the proof of \cite{DurCarErikBiqueKorteShanmu21}}.
    The proof is based on the combination of the pointwise estimates of Section \ref{sec:pointwise_estimates} and the following abstract min-max theorem in convex optimization ({\cite[Thm.\ 9.4.2]{Rudin80} (original proof in \cite{Sion58})}).

    \begin{theorem}
    \label{thm:minmax}
    Let $\X_1$ be a vector space and $\X_2$ be a topological vector space. Let $G \subseteq \X_1$ and $K \subseteq \X_2$ be convex subsets, with $K$ compact. Let $F \colon G \times K \to \mathbb{R}$ be such that
\begin{itemize}
    \item[a)] $F(\cdot,y)$ is convex on $G$ for every $y \in K$;
    \item[b)] $F(x,\cdot)$ is concave and upper semicontinuous in $K$ for every $x \in G$.
\end{itemize}
Then
    \begin{equation}
        \max_{y \in K} \inf_{x \in G} F(x,y) =  \inf_{x \in G} \max_{y \in K} F(x,y).
    \end{equation}
    \end{theorem}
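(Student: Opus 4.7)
The plan is to prove this classical result (a version of Sion's minimax theorem) by the standard two-step approach: establish weak duality for free, then argue the reverse inequality by contradiction together with a compactness and connectedness argument.

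The direction $\max_{y \in K} \inf_{x \in G} F(x,y) \le \inf_{x \in G} \max_{y \in K} F(x,y)$ is weak duality and holds for any real-valued function on any product set. Both outer extrema are attained under the hypotheses: the outer $\max$ on the right is attained because $F(x, \cdot)$ is upper semicontinuous on the compact set $K$; the outer $\max$ on the left is attained because $y \mapsto \inf_{x} F(x, y)$ is the pointwise infimum of a family of USC functions, hence itself USC on the compact $K$.

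For the reverse inequality, I would argue by contradiction: assume there exists $c \in \mathbb{R}$ with
\begin{equation*}
    \max_{y \in K} \inf_{x \in G} F(x,y) < c < \inf_{x \in G} \max_{y \in K} F(x,y).
\end{equation*}
For each $x \in G$, introduce the upper level set $K_x := \{y \in K : F(x,y) \ge c\}$. By upper semicontinuity, concavity in $y$, and the right-hand inequality, $K_x$ is nonempty, closed (hence compact in $K$), and convex. If the whole family $\{K_x\}_{x \in G}$ has nonempty intersection, any common point $y^\star$ would force $\inf_x F(x, y^\star) \ge c$, contradicting the left-hand inequality. By compactness of $K$, it thus suffices to show the family has the finite intersection property.

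The core is the two-point case: given $x_1, x_2 \in G$, show $K_{x_1} \cap K_{x_2} \ne \emptyset$. Parametrize $x_t := (1-t) x_1 + t x_2 \in G$ for $t \in [0,1]$; each $K_{x_t}$ is again nonempty, closed, convex, hence connected. By convexity of $F(\cdot, y)$, the map $t \mapsto F(x_t, y)$ is convex on $[0,1]$ for every $y$, so $F(x_t, y) \le \max(F(x_1, y), F(x_2, y))$ pointwise in $y$; this yields the inclusion $K_{x_t} \subseteq K_{x_1} \cup K_{x_2}$. Assuming for contradiction $K_{x_1} \cap K_{x_2} = \emptyset$, the connected set $K_{x_t}$ must lie entirely in one of the two disjoint closed pieces. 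Setting $A := \{t : K_{x_t} \subseteq K_{x_1}\}$ and $B := \{t : K_{x_t} \subseteq K_{x_2}\}$ gives a disjoint decomposition $[0,1] = A \sqcup B$ with $0 \in A$ and $1 \in B$; showing that both $A$ and $B$ are closed (via a sequential compactness argument in $K$ combined with the USC in $y$ and the convexity in $x$) contradicts the connectedness of $[0,1]$. The general finite intersection property is derived from the two-point case by induction on $n$, each inductive step being reduced to a convex-segment argument inside the inductively obtained compact convex intersection.

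The hard part will be closing the connectedness argument in the two-point case—specifically the closedness of $A$ and $B$—since it is precisely there that the three hypotheses (convexity in $x$, concavity in $y$, upper semicontinuity in $y$) must be combined simultaneously, and any attempt to take limits inside $F$ must carefully exploit the interplay of the convex and semicontinuity structures on the two separate variables.
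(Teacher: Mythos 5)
The paper does not prove this statement at all: it is quoted verbatim as \cite[Thm.~9.4.2]{Rudin80}, with the original proof attributed to \cite{Sion58}, so there is no in-paper argument to compare against. Your sketch is the standard Sion--Komiya route (upper level sets $K_x=\{y\in K: F(x,y)\ge c\}$, finite intersection property, two-point case via a connectedness argument along the segment $t\mapsto x_t$), and every step you do spell out is correct: weak duality, attainment of both outer maxima, nonemptiness/closedness/convexity of $K_x$, the inclusion $K_{x_t}\subseteq K_{x_1}\cup K_{x_2}$ from convexity in $x$, and the reduction of the general FIP to the two-point case by induction inside the compact convex intersection.

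The one step you defer --- closedness of $A$ and $B$ --- is genuinely the crux, and you should be aware of exactly why the naive limit argument is delicate: there is no semicontinuity hypothesis in the $x$-variable (indeed $\X_1$ carries no topology), so from $F(x_{t_n},y_n)\ge c$ one cannot pass to the limit in $x$ directly. It does close, with the ingredients you name plus one you omit: since $F(x_i,\cdot)$ is real-valued, upper semicontinuous on the compact $K$, it attains a finite maximum $M_i$; if $t_n\to t$ from the right, then $x_{t_n}$ lies on the segment $[x_t,x_2]$, so convexity gives $c\le F(x_{t_n},y_n)\le(1-\lambda_n)F(x_t,y_n)+\lambda_n M_2$ with $\lambda_n\to 0$, whence $\limsup_n F(x_t,y_n)\ge c$; extracting a convergent \emph{subnet} $y_n\to y$ (note that compactness in a general topological vector space does not give sequential compactness, so nets are needed) and applying upper semicontinuity in $y$ yields $F(x_t,y)\ge c$, i.e.\ $y\in K_{x_t}\cap K_{x_1}$, which by connectedness forces $K_{x_t}\subseteq K_{x_1}$. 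The left-approach case is symmetric. If you prefer not to rely on this boundedness transfer, the textbook alternative is Komiya's two-level trick (work with two constants $\alpha<\beta$ strictly between the two sides of the assumed strict inequality). Either way your outline is completable; as written, the proof is a correct plan with its hardest step honestly flagged but not executed.
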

    The proof goes as follows. (PI) implies (PtPI) by Proposition \ref{prop:Poincaré_equivalences_pointwise}.
    The (PtPI) condition implies that there exists a constant $C >0$ such that for every $x,y \in \X$ and every nonnegative continuous function $f$, there exists a rectifiable curve connecting $x$ to $y$ with
    \begin{equation}
    \label{eq:A_p_connectdness_item1}
        \int_\gamma f\,\d s \le C \int f \,\d \mm_{x,y}^L.
    \end{equation}
    This is a consequence of a, by now classical, argument in metric analysis that goes as follows. We fix a distinguished point $x_0$ and we define $u \colon \X \to [0,\infty]$ as
    \begin{equation*}
        u(x):=\inf\left\{ \int_{\gamma} f\,\d s\,:\, \gamma \text{ is rectifiable, }\gamma(0)=x,\, \gamma(1)=x_0\,\right\}.
    \end{equation*}
    Then $f$ is an upper gradient of $u$ and we apply (PtPI) to the couple $(u,f)$ and we get \eqref{eq:A_p_connectdness_item1}. Now, a key point is to embed the space of rectifiable curves into $\mathcal{P}(\Gamma_{x,y})$ via the map $\gamma \mapsto \delta_\gamma$. This gives, after extending \eqref{eq:A_p_connectdness_item1} to all nonnegative Borel functions
    \begin{equation*}
        \sup_{f \text{ continuous }}\,\inf_{\alpha \in \mathcal{P}(\Gamma_{x,y})}\,\left(C \int f \,\d \mm_{x,y}^L-\int \int_\gamma f\,\d s\,\d \alpha(\gamma)\right)>0.  
    \end{equation*}

    We set $G:=\left\{f\colon \X \to \mathbb{R}:\,f\text{ is continuous.} \right\}$, $K:=\mathcal{P}(\Gamma_{x,y}^L)$, that is convex and compact with respect to the weak$^*$ topology and $F(\alpha,f):=C \int f \,\d \mm_{x,y}^L-\int \int_\gamma f\,\d s\,\d \alpha(\gamma)$. We apply Theorem \ref{thm:minmax} with these choices and we obtain the conclusion.
    
    \item[2)] \emph{Sketch of the proof of \cite{FasslerOrponen19}}. We fix a couple of points $x,y \in \X$ with $x \neq y$ and we consider a $\delta$-net, i.e.\ a maximal set of points $\{x_i\}_{i\in I}$, where $I\subset \mathbb{N}$ and such that $\sfd(x_i,x_j) \ge \delta$ for $i \neq j$. If $\delta<\sfd(x,y)$, we can assume that $x,y$ belong to such $\delta$-net.
    W define a graph $(V,E)$ with $E \subset V \times V$, where $V=\{x_i\}_{i\in I}$, by declaring that $(x_i,x_j) \in E$ if and only if $i\neq j$ and $B_{2\delta}(x_i) \cap B_{2\delta}(x_j) \neq \emptyset$.

    \begin{figure}[h]
        \centering
        \includegraphics[scale=0.7]{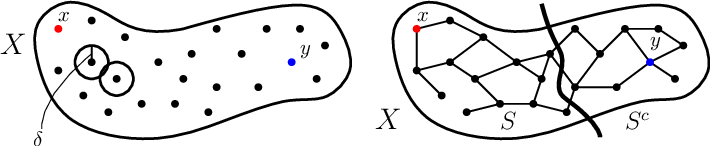}
        \caption{Example of the construction of the graph and of a cut $S$.}
    \end{figure}
    
    We associate a capacity $c \colon E \to (0,\infty)$, defined as
    \begin{equation*}
        c(x_i,x_j):=\frac{\mm(B_\delta(x_i))}{\delta} \frac{\sfd(x,x_i)}{\mm(B_{\sfd(x,x_i)}(x))}+ \frac{\mm(B_\delta(x_j))}{\delta} \frac{\sfd(y,x_j)}{\mm(B_{\sfd(y,x_j)}(y))}.
    \end{equation*}

    For every cut $S$, that is a set $S \subset V$ such that $x \in S$ and $y \notin S$, define
    \begin{equation*}
        \mathcal{C}(S):=\sum_{(z,w) \in E, z \in S,\,w \in S^c} c(z,w).
    \end{equation*}
    
    Using the PI assumption, we have that $\inf\left\{  \mathcal{C}(S)\,:\,S\text{ is a cut}\right\}=c_0>0$ (with the bound $c_0$ independent of $\delta$).
    \emph{Ford-Fulkerson algorithm} (\cite{FordFulkerson}) gives a flow on the graph $f \colon E \to \R$ such that
    \begin{equation*}
        \sum_{(x,z) \in E} f(x,z) = \inf\left\{  \mathcal{C}(S)\,:\,S\text{ is a cut} \right\}.
    \end{equation*}
    We do not further specify the notion of flow. It can be thought as an association of numbers to every edge that is less or equal than the capacity and satisfies some balance condition at every vertex.
    
    We take the limit in the sequence of flows in the following sense. Every flow induces a metric current in the sense of Ambrosio-Kircheim \cite{AK00} and the limit is intended with respect to weak convergence in the space of currents. The limit current $T$ is not trivial because of ${\sf PI}$ assumption and is normal (indeed $\partial T= \delta_y-\delta_x$). By \cite{PaoliniStepanov}, it induces a measure on curves with endpoints $x,y\in \X$ that is a $1$-pencil.
    
\end{itemize}

The proof in item 2) shows that `having large cuts' in a graph is equivalent to being a ${\sf PI}$ space. This can be seen as a discrete and combinatorial counterpart of the result we obtained in \cite{CaputoCavallucci2024}. We point out that both proofs shares the presence of a duality-type argument, i.e.\ the Fold-Fulkerson algorithm or the Sion min-max principle.

\subsection{Modulus estimates}

Another quantification on the family of curves is given by the modulus of a family of curves.

We define the class of admissible functions for a family $\Gamma$ of rectifiable curves as
\begin{equation*}
    {\rm Adm}(\Gamma):=\left\{ \rho \colon \X \to [0,\infty]\,:\,\rho \text{ is Borel and }\int_\gamma \rho \ge 1 \text{ for every }\gamma \in \Gamma \right\}.
\end{equation*}

We define the modulus of a family of rectifiable curves $\Gamma$ with respect to a nonnegative Borel measure $\mu$ as 
\begin{equation*}
    {\rm Mod}(\Gamma, \mu):=\inf \left\{ \int_\X \rho \,\d \mu\,:\, \rho \in {\rm Adm}(\Gamma) \right\}.
\end{equation*}
It follows by the very definition that, if $\Gamma_1,\Gamma_2$ are families of rectifiable curves with $\Gamma_1 \subset \Gamma_2$, then ${\rm Mod}(\Gamma_1,\mu) \le {\rm Mod}(\Gamma_2,\mu)$. Thus, enlarging a set of curves will increase its modulus.
One can think that the modulus of a family of curves quantifies how large a family of curve is in a measure-theoretic sense. This metric measure quantification of curves allow to characterize the Poincar\'{e} inequality. 
\begin{theorem}[Characterization 2, \cite{Kei03}]
\label{thm:characterization_2}
A doubling metric measure space is a ${\sf PI}$ space if and only if there exist constants $C_2>0,L \ge 1$ such that for every couple of points $x,y \in \X$ we have 
\begin{equation}
\label{eq:modulus_lower_bound}
    {\rm Mod}(\Gamma^L_{x,y},\mm_{x,y}^L) \ge C_2.
\end{equation}
\end{theorem}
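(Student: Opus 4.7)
The plan is to deduce both directions by appealing to Theorem \ref{thm:characterization_1} (pencil of curves) for the \emph{only if} implication and to Proposition \ref{prop:Poincaré_equivalences_pointwise} (the pointwise characterization (PtPI)) for the \emph{if} implication. Both amount to duality arguments, each obtained by testing the admissibility condition against a suitable object.

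\smallskip
\textbf{Forward direction.} Suppose $(\X,\sfd,\mm)$ is a ${\sf PI}$ space. By Theorem \ref{thm:characterization_1}, it admits a pencil of curves with some constants $C_1 > 0$ and $L\ge 1$. Fix $x,y \in \X$ and let $\alpha \in \mathcal{P}(\Gamma^L_{x,y})$ satisfy \eqref{eq:pencil}. For any $\rho \in {\rm Adm}(\Gamma^L_{x,y})$, the admissibility yields $\int_\gamma \rho\,\d s \ge 1$ for every $\gamma \in \Gamma_{x,y}^L$; integrating this inequality against the probability measure $\alpha$ and using \eqref{eq:pencil} with $g=\rho$ gives
\begin{equation*}
    1 \le \int \int_\gamma \rho\,\d s\,\d\alpha(\gamma) \le C_1 \int \rho R^L_{x,y}\,\d \mm = C_1 \int \rho\,\d\mm^L_{x,y}.
\end{equation*}
Taking the infimum over $\rho$ gives ${\rm Mod}(\Gamma_{x,y}^L, \mm_{x,y}^L) \ge 1/C_1 =: C_2$, independently of $x,y$.

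\smallskip
\textbf{Reverse direction.} Assuming \eqref{eq:modulus_lower_bound}, I would verify (PtPI) with the same constants $L$ and $C := 1/C_2$; Proposition \ref{prop:Poincaré_equivalences_pointwise} then yields (PI). Fix $x,y \in \X$ distinct and $u \in {\rm Lip}(\X)$. If $u(x) = u(y)$ the inequality \eqref{eq:Riesz_PtPI} is trivial, so suppose otherwise and set
\begin{equation*}
    \rho := \frac{\lip u}{|u(x)-u(y)|}.
\end{equation*}
For any $\gamma \in \Gamma^L_{x,y}$, the fact that $\lip u$ is an upper gradient of $u$ (see \eqref{eq:lipu_is_an_uppergradient}) gives $\int_\gamma \rho\,\d s \ge 1$, so $\rho \in {\rm Adm}(\Gamma^L_{x,y})$. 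By the modulus lower bound,
\begin{equation*}
    C_2 \le {\rm Mod}(\Gamma^L_{x,y}, \mm^L_{x,y}) \le \int \rho\,\d\mm_{x,y}^L = \frac{1}{|u(x)-u(y)|}\int \lip u\,\d\mm_{x,y}^L,
\end{equation*}
which rearranges precisely to the pointwise Poincaré inequality \eqref{eq:Riesz_PtPI} with constant $1/C_2$.

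\smallskip
\textbf{Expected difficulty.} Both implications are essentially one-line duality arguments once the right building blocks are in place; the substantive work is hidden in Theorem \ref{thm:characterization_1} (used in the forward direction) and in the self-improvement from (PtPI) to (PI) from Proposition \ref{prop:Poincaré_equivalences_pointwise}. The only genuine subtlety to keep an eye on is that the restriction to $\Gamma^L_{x,y}$ (rather than all rectifiable curves) strengthens the forward statement (smaller curve family means smaller modulus, so a lower bound is harder to achieve), which is precisely why the pencil of curves, supported on quasigeodesics, is the correct witness; in the reverse direction the restriction is harmless because admissibility on the subfamily suffices to exploit $\lip u$ being an upper gradient.
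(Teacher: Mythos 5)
Your proof is correct and follows essentially the same route as the paper: the forward implication is deduced by integrating the admissibility condition against the pencil of curves from Theorem \ref{thm:characterization_1}, and the reverse implication tests the modulus bound with $\lip u/|u(x)-u(y)|$ and then invokes Proposition \ref{prop:Poincaré_equivalences_pointwise}. The only differences are cosmetic (you explicitly handle the degenerate case $u(x)=u(y)$, which the paper leaves implicit).
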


We show how Theorem \ref{thm:characterization_2} can be proved as a corollary of Theorem \ref{thm:characterization_1}.

\begin{proof}
We prove the only if implication. Since $(\X,\sfd,\mm)$ is a ${\sf PI}$ space we have that for every couple of points $x,y\in \X$ there exists $\alpha \in \mathcal{P}(\Gamma_{x,y}^L)$ such that \eqref{eq:pencil} holds.
Let $\rho \in {\rm Adm}(\Gamma_{x,y}^L)$. In particular,
\begin{equation*}
    \int \int_\gamma \rho \,\d s \, \d \alpha(\gamma) \ge \alpha(\Gamma_{x,y}^L)=1.
\end{equation*}
This last inequality, in combination with \eqref{eq:pencil} gives that $\int \rho \,\d \mm_{x,y}^L \ge C_1^{-1}$. Therefore, this implies \eqref{eq:modulus_lower_bound} with $C_2:=C_1^{-1}$.
For the if implication, we argue as in \cite{Kei03}. Fix two points $x,y \in \X$. Given a locally Lipschitz function $u$, we consider $g:=\lip u/|u(x)-u(y)|$. By \eqref{eq:lipu_is_an_uppergradient} $g\in {\rm Adm}(\Gamma_{x,y}^L)$, therefore by assumption
\begin{equation*}
    \frac{ \int \lip u\,\d \mm_{x,y}^L}{|u(x)-u(y)|} \ge \int g \,\d \mm_{x,y}^L \ge {\rm Mod}(\Gamma_{x,y}^L,\mm_{x,y}^L) \ge C_2.
\end{equation*}
By repeating the argument for every $x,y \in \X$, from Proposition \ref{prop:Poincaré_equivalences_pointwise} we get the conclusion.
\end{proof}

\subsection{Obstacle-avoidance principle}
\label{sec:obstacle_avoidance}

The obstacle-avoidance principle says that for a given set and a given couple of points, one can always find a rectifiable path joining the points that spends little time in the set.
This amount of time is considered small when compared to the size of the set, that can be computed with two different choices of energy.

A first example of energy is given by the \emph{Hardy-Littlewood maximal function}. We define

\begin{equation*}
    M_s f(x):=\sup_{0<r<s} \dashint_{B_r(x)} |f|\,\d \mm\quad\text{for }f \in L^1_{{\rm loc}}(\X).
\end{equation*}

Eriksson-Bique in \cite{ErikssonBique2019II} defined the following condition (a similar condition was previously discussed in \cite{ErikssonBique2019}).

\begin{definition}[$A_1$-connectdness \cite{ErikssonBique2019II}]

    There exist two constants $L\ge 1,C_A> 0$ such that for every nonnegative lower semicontinuous function $g$ and for every $x,y$, there exists a Lipschitz curve $\gamma \in \Gamma_{x,y}^L$ such that 
    \begin{equation}
    \label{eq:Ap_connectdness}
        \int_\gamma g\,\d s\le C_A \sfd(x,y) \left( M_{C \sfd(x,y)} g (x) + M_{C \sfd(x,y)} g (y) \right).
    \end{equation}
\end{definition}

Given a doubling metric measure space, the condition in the previous definition can be thought as a quantification of quasiconvexity and characterizes ${\sf PI}$ spaces. Thus, to prove the Poincar\'{e} inequality, it suffices to find for every function $g$ as in the definition a curve $\gamma$ with controlled length that `travels' in the region where $g$ is sufficiently small, in the sense of \eqref{eq:Ap_connectdness}.

The subscript $1$ is related to the fact that an $A_p$ condition can be formulated and characterized the $p$-Poincar\'{e} inequality for $p \ge 1$. The self-improvement of the $A_p$ condition was used in \cite{ErikssonBique2019II} to reprove with a short argument Keith-Zhong's result about self-improvement of the $p$-Poincar\'{e} inequality.

It turns out that working with sets instead of functions allows to use a smaller class of objects in order to check that a space is a ${\sf PI}$ space.

\begin{definition}[Maximal connectivity {\cite[Definition 2.12]{Sylvester-Gong-21}}]
Let $L\ge 1$ and $C >0$. We say that a metric measure space $(\X,\sfd,\mm)$ is $(L,C)$-max connected if for every couple of points $x,y \in \X$ and for every Borel set $E\subset \X$ there exists a Lipschitz curve $\gamma \in \Gamma_{x,y}^L$ such that
\begin{equation*}
    \ell(\gamma \cap E) \le C \sfd(x,y) \left(M_{Cr}(\chi_E)(x)+ M_{Cr}(\chi_E)(y) \right).
\end{equation*}
\end{definition}

As before, also in this case, the authors present natural variants of the definition for $p >1$ that are related a to a $p$-Poincar\'{e} inequalities. This condition was used in \cite{Sylvester-Gong-21} in different contexts. For instance, they characterize the Sierpi\'{n}ski sponges that satisfy a $p$-Poincar\'{e} inequality and the range of $p$ for which the inequality holds (generalizing a result in \cite{MackayTysonWildrick2013} in dimension 2) and they give new examples of Sobolev extension domains in the metric setting.
The relation with ${\sf PI}$ spaces (or more generally, to the validity of $p$-Poincar\'{e} inequalities) is given by the following statement (a combination of \cite[Theorems 2.18, 2.19 and Lemma 2.20]{Sylvester-Gong-21}).

\begin{proposition}
Let $(\X,\sfd,\mm)$ be a doubling metric measure space. If $\X$ is a ${\sf PI}$ space, then then it is $(L,C)$-max connected for some constants $L \ge 1$ and $C>0$. Conversely, if $\X$ is $(C,L)$-max connected for some constants $C>0$ and $L \ge 1$, then it satisfies a $p$-Poincar\'{e} inequality for every $p >1$.
\end{proposition}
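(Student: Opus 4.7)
The plan is to prove the two implications separately, using the pencil of curves characterization (Theorem \ref{thm:characterization_1}) for the forward direction and a Hardy--Littlewood-type pointwise estimate for the converse.

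\emph{Forward direction.} Set $r:=\sfd(x,y)$. By Theorem \ref{thm:characterization_1}, there exists $\alpha\in\mathcal{P}(\Gamma^L_{x,y})$ satisfying \eqref{eq:pencil}. Applying this with $g=\chi_E$ for a Borel set $E\subset\X$ gives
\[
\int \ell(\gamma\cap E)\,d\alpha(\gamma) \le C_1 \int_E R^L_{x,y}\,d\mm.
\]
The key computation is to bound the right-hand side by $C'r\bigl(M_{C''r}(\chi_E)(x)+M_{C''r}(\chi_E)(y)\bigr)$. I would split $R^L_{x,y}=R_x+R_y$ on $B^L_{x,y}$ and perform a dyadic annular decomposition around each pole: on the annulus $A_k$ around $x$ of inner radius $r_k:=2^{-k}\cdot 2Lr$, doubling yields $R_x\lesssim r_k/\mm(B_{r_k}(x))$, hence $\int_{E\cap A_k}R_x\,d\mm\lesssim r_k\,M_{C''r}(\chi_E)(x)$; the geometric series $\sum_k r_k\lesssim r$ then closes the estimate. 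A Markov/averaging argument applied to the displayed inequality extracts a single curve $\gamma\in\Gamma^L_{x,y}$ realizing the max-connectedness bound.

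\emph{Reverse direction.} Assuming $(L,C)$-max connectedness, I would aim to establish, for every $s>1$ and every Lipschitz $u$ with upper gradient $g$, the pointwise estimate
\[
|u(x)-u(y)| \le C'\,r\,\bigl((M_{C''r}g^s)^{1/s}(x)+(M_{C''r}g^s)^{1/s}(y)\bigr).
\]
Once this is in hand, raising to power $p$, averaging over a ball, and invoking boundedness of the Hardy--Littlewood maximal operator on $L^{p/s}(\mm)$ (which holds because $p/s>1$) yields the $p$-Poincar\'e inequality for every $p>s$; taking $s$ arbitrarily close to $1$ gives it for every $p>1$. To prove the pointwise estimate, set $\lambda:=(M_{C''r}g^s(x))^{1/s}+(M_{C''r}g^s(y))^{1/s}$ and apply max-connectedness to the dyadic super-level sets $E_k:=\{g>2^k\lambda\}$: by Chebyshev one has $M_{C''r}(\chi_{E_k})\le 2^{-ks}$ at both $x$ and $y$, so max-connectedness furnishes $\gamma_k\in\Gamma^L_{x,y}$ with $\ell(\gamma_k\cap E_k)\le Cr\cdot 2^{-ks+1}$.

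\emph{Main obstacle.} The principal difficulty is to combine these scale-by-scale estimates into a bound $\int_\gamma g\,ds\le C\lambda r$ along a \emph{single} curve, as the upper gradient inequality operates with a fixed path. A natural approach is an iterative stopping-time/gluing construction: begin with $\gamma_0$ and, on each sub-arc still lying in $E_1$, substitute a suitable piece of $\gamma_1$; iterate through $\gamma_2,\gamma_3,\dots$. If the resulting concatenation can be kept in $\Gamma^{L'}_{x,y}$ for some universal $L'$, then the $k$-th refinement contributes at most $2^k\lambda\cdot Cr\cdot 2^{-ks}=C\lambda r\cdot 2^{-k(s-1)}$ to $\int g\,ds$, and the geometric series converges precisely because $s>1$, which is exactly why the argument fails at $p=1$. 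Verifying the geometric admissibility of this gluing (ensuring the concatenated curve remains quasigeodesic) is the technical heart of the argument and is where the formalism developed in \cite{Sylvester-Gong-21} does its work.
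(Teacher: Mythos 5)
The paper itself does not prove this proposition; it is stated as a combination of \cite[Theorems 2.18, 2.19 and Lemma 2.20]{Sylvester-Gong-21}, so your proposal has to be judged on its own terms rather than against an argument in the text.

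Your forward direction is correct and essentially complete. Testing the pencil inequality \eqref{eq:pencil} with $g=\chi_E$, bounding $\mm_{x,y}^L(E)$ by $C'\sfd(x,y)\bigl(M_{C''\sfd(x,y)}(\chi_E)(x)+M_{C''\sfd(x,y)}(\chi_E)(y)\bigr)$ via the dyadic annular decomposition (doubling gives $R_x\lesssim r_k/\mm(B_{r_k}(x))$ on the $k$-th annulus, and $\sum_k r_k\lesssim L\sfd(x,y)$), and then selecting a single $\gamma$ in the support of $\alpha$ with $\ell(\gamma\cap E)$ at most twice the $\alpha$-average is the standard Semmes-type argument and it closes.

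The converse contains a genuine gap, which you only partly acknowledge. The scale-by-scale application of max-connectedness yields a \emph{different} curve $\gamma_k$ for each level set $E_k=\{g>2^k\lambda\}$, whereas the upper-gradient inequality needs $\int_\gamma g\,\d s\le C\lambda\,\sfd(x,y)$ along one curve. Your proposed repair --- iteratively replacing the sub-arcs of the current curve that still meet $E_k$ by better arcs --- is the right idea, but the deferred step is not merely ``verify the concatenation is quasigeodesic''. Two substantive issues are hidden there. First, when you replace a sub-arc between its entry point $p$ and exit point $q$ of the bad set, you must invoke max-connectedness for the \emph{new} pair $(p,q)$, and this requires $M_{C''\sfd(p,q)}(\chi_{E_k})$ to be small \emph{at $p$ and $q$}; that is not implied by the smallness of the maximal function at $x$ and $y$, and securing it forces a careful stopping-time choice of the entry/exit points (placed where the local density of $E_k$ is controlled, using doubling). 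Second, one must check that the limit curve exists (geometrically summable replacement lengths, then reparametrization and compactness), that it is an $L'$-quasigeodesic with $L'$ independent of $u$, and that the replacements performed at stage $j>k$ do not re-enter $E_k$ and destroy the bound $\ell(\gamma\cap E_k)\lesssim \sfd(x,y)\,2^{-ks}$ that was achieved at stage $k$. These verifications are precisely where the argument genuinely uses $s>1$ and where the cited reference does its work; as written, your converse is a correct strategy but not yet a proof.
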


The obstacle-avoidance principle can be alternatively formulated using the Riesz kernel in place of the maximal function. This allows a characterization of ${\sf PI}$ spaces and it is one of the main novelties introduced in \cite{CaputoCavallucci2024II}.
To this aim, we introduce the following auxiliary quantity.
We define the width of a set $A \subseteq \X$ with respect to the points $x,y\in \X$ as 
$$\width_{x,y}(A) := \inf_{\gamma \in \Gamma_{x,y}} \ell(\gamma \cap A),$$
where $\Gamma_{x,y}$ is the set of rectifiable paths connecting $x$ to $y$. The quantity $\width_{x,y}(A)$ measures the width of the set $A$ in the following sense: we consider all the curves (with finite length) connecting $x$ to $y$ and we look at the one whose length inside $A$ is minimal.

\begin{definition}[1-set-connectedness]
\label{def:CLLsetconnected}
    Let $C>0$ and $L \geq 1$. We say that $(\X,\sfd,\mm)$ is $(C,L)$ $1$-set-connected at $x,y\in \X$ if
\begin{equation}
    \label{eq:defin_set_connectedness_bounded_intro}
    \width_{x,y}(A)\leq C\mm_{x,y}^L(A) \text{ for all } A\subseteq \X  \text{ Borel}.
\end{equation}
\end{definition}

There are several equivalent definitions to 1-set-connectedness. The condition can be asked for all closed set instead of all Borel sets and the width can be defined as the minimum over $\tilde{L}$-quasigeodesics connecting $x$ to $y$ for some $\tilde{L} \ge 1$.

A positive side of this definition is that it characterizes ${\sf PI}$ spaces. We will see the proof in Section \ref{sec:a_more_geometric_characterization}.

\begin{proposition}[Characterization 3 {\cite[Theorem 1.4]{CaputoCavallucci2024II}}]
\label{prop:characterization_3}
    Let $(\X,\sfd,\mm)$ be a doubling metric measure space. Then it is a $\sf PI$ space if and only if $\X$ is $(C,L)$ 1-set-connected for some constants $C>0,L \ge 1$.
\end{proposition}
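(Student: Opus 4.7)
I split the argument into the two implications and route the harder direction through the codimension-1 theory of Section \ref{sec:codimension_1}.

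\emph{Forward direction (PI $\Rightarrow$ 1-set-connectedness).} I would invoke the pencil of curves characterization, Theorem \ref{thm:characterization_1}: for each $x, y \in \X$ there is $\alpha \in \mathcal{P}(\Gamma_{x,y}^L)$ satisfying the pencil estimate \eqref{eq:pencil}. Specializing to $g = \chi_A$ for a Borel $A \subseteq \X$ gives
\begin{equation*}
\int \ell(\gamma \cap A)\, \d\alpha(\gamma) \leq C_1\, \mm_{x,y}^L(A).
\end{equation*}
Since $\alpha$ is a probability measure supported on $\Gamma_{x,y}^L \subseteq \Gamma_{x,y}$, the infimum of $\gamma \mapsto \ell(\gamma \cap A)$ over $\Gamma_{x,y}$ is bounded by the $\alpha$-average on the right, yielding $\width_{x,y}(A) \leq C_1 \mm_{x,y}^L(A)$, which is 1-set-connectedness with constants $(C_1, L)$.

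\emph{Backward direction (1-set-connectedness $\Rightarrow$ PI).} The plan is to translate 1-set-connectedness into a uniform Riesz-weighted lower Minkowski bound on every set that separates $x$ from $y$, and then to close by the codimension-1 characterization of PI developed in Section \ref{sec:codimension_1}. Fix $x, y$, a Borel $S \subseteq \X$ separating them, and $r > 0$ small enough that $r < \min\{\sfd(x, S), \sfd(y, S)\}$; write $S_r$ for the open $r$-neighborhood of $S$. A continuity argument together with the triangle inequality shows that every $\gamma \in \Gamma_{x,y}$ must cross $S$, and at each crossing it accumulates arclength at least $2r$ inside $S_r$ (length $r$ to descend from $\partial S_r$ to $S$, then length $r$ to exit). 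Hence $\width_{x,y}(S_r) \geq 2r$, and combining with 1-set-connectedness gives
\begin{equation*}
\frac{\mm_{x,y}^L(S_r)}{r} \geq \frac{2}{C} \qquad \text{for all sufficiently small } r > 0.
\end{equation*}
Passing to the $\liminf$ as $r \to 0^+$ produces a Riesz-weighted lower Minkowski bound uniform over separating sets of $x$ and $y$, which Section \ref{sec:codimension_1} identifies with PI.

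\emph{Main obstacle.} The substantive step is the final one: turning the separating-set isoperimetric bound back into the Poincaré inequality for Lipschitz functions. The geometric reduction from width to isoperimetric is clean, but the inference from isoperimetric to PI requires the full coarea-type codimension-1 machinery. A direct approach that bypasses separating sets -- for instance, feeding the superlevel sets $\{\lip u > t\}$ of a Lipschitz function into 1-set-connectedness and integrating in $t$ -- runs into a selection obstacle: the definition supplies a potentially different near-optimal curve for each level $t$, with no tautological way to produce a single curve realizing all estimates simultaneously. Without the codimension-1 detour one would need either a compactness argument or a duality principle (in the spirit of the Sion min-max of Theorem \ref{thm:minmax}) to aggregate these per-level curves into a pencil.
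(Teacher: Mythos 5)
Your proposal is correct and follows essentially the same route as the paper's proof of Theorem \ref{theo:main-intro-p=1}: the forward direction via the pencil-of-curves estimate applied to $\chi_A$, and the backward direction by converting $1$-set-connectedness into a lower bound on the Riesz-weighted Minkowski content of separating sets (the paper's condition \textup{(BMC)}) and then closing via the coarea/codimension-1 machinery of Theorem \ref{theo:main-intro-p=1-riproposed}. The only cosmetic difference is that you thicken a thin interface $S$ on both sides, whereas the paper takes the one-sided collar $\overline{B}_r(\Omega)\setminus\Int(\Omega)$ of the closed separating set $\Omega$ (for which $\sfd(x,\Omega)=0$, so the relevant smallness condition is on $\sfd(x,\partial\Omega)$), which matches the definition of $(\mm_{x,y}^L)^+(\Omega)$ exactly.
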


Actually, in \cite[Theorem 1.4]{CaputoCavallucci2024II} we prove a stronger statement. We prove that the condition of 1-set-connectedness verified only for a fixed couple of points $x,y\in\X$ is equivalent to the inequality \eqref{eq:Riesz_PtPI} at the same fixed couple of points for all Lipschitz functions $u$. Then, Proposition \ref{prop:characterization_3} follows as a consequence of Proposition \ref{prop:Poincaré_equivalences_pointwise}.

\begin{example}
    We study this characterization in the simple case of the Euclidean space $\mathbb{R}^d$ and we show the scaling of the quantities involved in the definition of $(C,L)$ 1-set-connectedness. We fix two points $x,y \in \mathbb{R}^d$ and we look at $A=B_r(x)$ with $0<r<\sfd(x,y)$.  
    \begin{figure}[h!]
        \centering
        \includegraphics[width=0.4\linewidth]{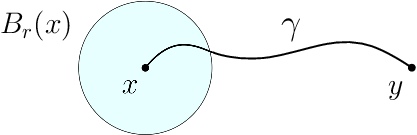}
        \caption{Computation in the toy-example of $\X=\mathbb{R}^d$ and $A=B_r(x)$.}
        \label{fig:obstacle_avoidance}
    \end{figure}
    By the definition of $\width_{x,y}(B_r(x))$, we have that the infimum in the definition is realized by every rectifiable curve connecting $x$ to $y$ that `travels once' in a radial direction inside $B_r(x)$.
    Thus, $\width_{x,y}(B_r(x))=r$. For what concerns the right-hand side, we compute with $L=1$
    \begin{equation*}
    \begin{aligned}
        \mm^L_{x,y}&(B_r(x))=\int_{B_r(x)} R_x\,\d\mathcal{L}^d+ \int_{B_r(x)} R_y\,\d\mathcal{L}^d\\
        &= \int_0^r {\omega_d}^{-1} s^{1-d} \mathcal{H}^{d-1}(\partial B_s(x))\,\d s + \int_{B_r(x)} R_y\,\d\mathcal{L}^d=\frac{\omega_{d-1}}{\omega_d} r +\int_{B_r(x)}R_y \d \mathcal{L}^d \ge \frac{\omega_{d-1}}{\omega_d} r.
    \end{aligned}
    \end{equation*}
    Notice that, for $r$ approaching $0$, $\mm_{x,y}^L(B_r(x))=\omega_{d-1}\omega_d^{-1} r+O(r^d)$. The computation shows that $(\omega_{d}/\omega_{d-1},1)$ 1-set-connectedness is verified for such balls. The general case is more complicated. Indeed, one needs to study
    \begin{equation*}
        \inf_{A \subset \X:\, A \text{ Borel}} \frac{\mm_{x,y}^L(A)}{\width_{x,y}(A)}
    \end{equation*}
    suitably defining the ratio in the undetermined cases. So it suffices to study the minimizers of such ratio and compute the ratio only for such sets. We do this by connecting this theory with the theory of separating sets in \cite{CaputoCavallucci2024II}. This is the content of Section \ref{sec:a_more_geometric_characterization}.
\end{example}

\section{Characterizations in terms of objects of codimension 1}
\label{sec:codimension_1}

The characterizations of the next sections involve boundaries of sets and the notion of surface area associated to them. There are several notions of what the boundary of a Borel set $E$ is: either the topological boundary $\partial E$ or the \emph{measure-theoretic boundary} 
\begin{equation*}
    \partial^e E = \left\{ x\in \X : \lims_{r\to 0}\frac{\mm(B_r(x)\cap E)}{\mm(B_r(x))} > 0 \text{ and } \lims_{r\to 0}\frac{\mm(B_r(x)\setminus E)}{\mm(B_r(x))} > 0  \right\}.
\end{equation*}
It holds that $\partial^e E \subset \partial E$.

There are several notions of surface area. The
{\em total variation} of $u \in L^1_\loc (X)$ on an open set $A \subset X$ is
\[ 
|D u|(A) := \inf \left\{ \liminf_{i\to \infty} \int_A \lip u_i \,\d\mm :\, u_i \in \Lip_\loc(A),\, u_i \to u \mbox{ in } L^1_\loc(A) \right\}. 
\]
For an arbitrary set $B\subset X$, we define
\[
|D u|(B):=\inf\{|D u|(A):\, B\subset A,\,A\subset X
\text{ open}\}.
\]
The set function $|D u|\colon \mathscr{B}(\X) \to [0,\infty]$ is a
Borel measure on $\X$ by \cite[Theorem 3.4]{Mir03} (see also \cite[Lemma 5.2]{AmbrosioDiMarino14}).
We say that a Borel set $E \subseteq \X$ has finite perimeter if $|D\chi_E|(\X) < \infty$. In this case, we set ${\rm Per}(E,\cdot):=|D \chi_E|$.

Next, we introduce the codimension-$1$ Hausdorff measure. 
We denote by $\codH{1}_\delta$ the pre-measure with parameter $\delta >0$ defined as
\begin{equation}
    \codH{1}_\delta(E):=\inf \left\{ \sum_{j=1}^\infty \frac{\mm(B_{r_j}(x_j))}{r_j}:\,E \subset \bigcup_{j=1}^\infty B_{r_j}(x_j),\,\sup_j r_j <\delta \right\}.
\end{equation}

    The codimension-$1$ Hausdorff measure $\codH{1}$ is the Borel regular outer measure defined as
\begin{equation}
    \codH{1}(E) :=\sup_{\delta >0} \codH{1}_\delta(E) \,\text{for every }E\subseteq \X.
\end{equation}
We use the notation $\codH{1}_\mm$ in place of $\codH{1}$ when we want to emphasize the measure used in the definition.
We point out that if the metric measure space is $Q$-Ahlfors regular for some $Q \ge 1$, this measure is comparable to $\mathcal{H}^{Q-1}$. The definition may be adapted to define the codimension-$p$ Hausdorff measure for $p>1$, but this is out of the scope of the note.

A third way to measure the energy of a set is the Minkowski content.
\begin{definition}
    Let $(\X,\sfd,\mm)$ be a metric measure space and let $A\subseteq \X$ be Borel. The Minkowski content of $A \subset \X$ is
\begin{equation*}
    \mm^{+}(A):=\limi_{r \to 0} \frac{\mm\left(B_r(A) \setminus A \right)}{r}.
\end{equation*}
\end{definition}

Because of some ambiguity about the Minkowski content, we define also the following quantity:
\begin{equation*}
    \widetilde{\mm^+}(A):=\limi_{r \to 0} \frac{\mm(B_r(A))}{2r}.
\end{equation*}

The perimeter, the codimension-$1$ Hausdorff measure and the Minkowski content are all notions that allow to measure the `surface area' of a Borel set in a measure-theoretic sense and they are related to each other (see for instance \cite[Section 3]{CaputoCavallucci2024}).

Moreover, they all satisfy a coarea inequality for Lipschitz functions, where the modulus of differential of a Lipschitz function in the Euclidean formula is replaced here by the local Lipschitz constant.
For the statements about the coarea inequalities, see \cite{Mir03} for the perimeter, \cite[Proposition 5.1]{AmbDiMarGig17} (and \cite[Proposition 3.7]{CaputoCavallucci2024}) for the codimension-$1$ Hausdorff measure and \cite[Prop.\ 3.5]{KorteLahti2014} and \cite[Lemma 3.2]{AmbDiMarGig17} for the Minkowski content. 

\subsection{Energy of separating sets}
\label{sec:separating_sets}

We review the characterization developed in \cite{CaputoCavallucci2024}. We only focus on some conditions presented there. We need to introduce the notion of separating sets, that may be thought as a continuous analog to the cuts in a graph, as discussed in item 2) below the statement of Theorem \ref{thm:characterization_1}.

\begin{definition}
\label{def:separating_sets}
    Let $(\X,\sfd,\mm)$ be a metric measure space and let $x,y\in \X$. A closed set $\Omega$ is a \emph{separating set from $x$ to $y$} if there exists $r>0$ such that $B_r(x) \subseteq \Omega$ and $B_r(y) \subseteq \Omega^c$. We denote by $\SS_{\textup{top}}(x,y)$ the class of all separating sets from $x$ to $y$.
\end{definition}

The main result of \cite{CaputoCavallucci2024} is the following one.

\begin{theorem}[\cite{CaputoCavallucci2024}]
\label{theo:main-intro-p=1-riproposed}
Let $(\X,\sfd,\mm)$ be a doubling metric measure space. Then the following conditions are quantitatively equivalent:
\begin{itemize}
    \item[(PI)] $1$-Poincar\'{e} inequality;
    \item[(BP)] $\exists c>0, L\geq 1$ such that $\int R_{x,y}^L\,\d {\rm Per}_\mm(\Omega, \cdot) \ge c$ for every $x, y \in \X$ and $\Omega \in \SS_{\textup{top}}(x,y)$;
    \item[(BP$_\textup{R}$)] $\exists c>0, L\geq 1$ such that ${\rm Per}_{\mm_{x,y}^L}(\Omega) \ge c$ for every $x, y \in \X$ and $\Omega \in \SS_{\textup{top}}(x,y)$;
    \item[(BMC)] $\exists c>0, L\geq 1$ such that $(\mm_{x,y}^L)^{+}(\Omega) \ge c$ for every $x, y \in \X$ and  $\Omega \in \SS_{\textup{top}}(x,y)$;
    \item[(BH)] $\exists c>0, L\geq 1$ such that $\int_{\partial \Omega} R_{x,y}^L\,\d \codH{1}_\mm \ge c$ for every $x, y \in \X$ and  $\Omega \in \SS_{\textup{top}}(x,y)$;
    \item[(BH$^e$)] $\exists c>0, L\geq 1$ such that $\int_{\partial^e \Omega} R_{x,y}^L\,\d \codH{1}_\mm \ge c$ for every $x, y \in \X$ and  $\Omega \in \SS_{\textup{top}}(x,y)$;
    \item[(BH$_\textup{R}$)] $\exists c>0, L\geq 1$ such that ${\codH{1}_{\mm_{x,y}^L}}(\partial \Omega) \ge c$ for every $x, y \in \X$ and   $\Omega \in \SS_{\textup{top}}(x,y)$;
    \item[(BH$^e_\textup{R}$)] $\exists c>0, L\geq 1$ such that ${\codH{1}_{\mm_{x,y}^L}}(\partial^e \Omega) \ge c$ for every $x, y \in \X$ and   $\Omega \in \SS_{\textup{top}}(x,y)$.
\end{itemize}
\end{theorem}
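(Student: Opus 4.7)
The eight conditions split naturally into two groups. The conditions with Riesz weight against a reference energy (BP), (BH), (BH$^e$) and their ``R-subscripted'' counterparts, where the same energy is taken with respect to the weighted measure $\mm_{x,y}^L$, ought to be equivalent by the identity $\mu_{f\mm} = f\,\mu_{\mm}$ for the relevant surface measure $\mu \in \{{\rm Per}, \codH{1}\}$, which holds for the locally bounded, Borel, positive weight $R^L_{x,y}$ (restricted to the boundary, which avoids the poles $x,y$). Among the surface energies, the trivial chain $\partial^e \Omega \subseteq \partial \Omega$ gives (BH$^e$)$\Rightarrow$(BH) and (BH$^e_{\rm R}$)$\Rightarrow$(BH$_{\rm R}$), while general comparison theorems (see the preamble to this section, \cite{Mir03} and \cite[Prop.~5.1]{AmbDiMarGig17}) yield the universal pointwise bounds ${\rm Per}_\mm \le \codH{1}_\mm\llcorner \partial^e(\cdot)$ and the two-sided doubling comparison between Minkowski content and perimeter via coarea applied to $\sfd(\cdot,\Omega)$. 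My plan is therefore to prove (PI)$\Leftrightarrow$(BP) as the main equivalence, then deduce everything else by chaining these inequalities together with the R-rewriting.

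\textbf{(BP)$\Rightarrow$(PI).} Given $u \in \Lip(\X)$ and $x,y \in \X$ with, say, $u(x) > u(y)$, take the closed level sets $E_t := \{u \ge t\}$. Since $u$ is continuous, for every $u(y) < t < u(x)$ there is a neighborhood of $x$ on which $u > t$ and a neighborhood of $y$ on which $u < t$, so $E_t \in \SS_{\rm top}(x,y)$. Hypothesis (BP) yields $\int R_{x,y}^L\,\d{\rm Per}(E_t,\cdot) \ge c$ for every such $t$. Integrating in $t$ and invoking the coarea inequality $\int g\,\d|Du| \le \int g\,\lip u\,\d\mm$ for Borel $g \ge 0$ together with the BV coarea formula $|Du|(\cdot) = \int_\R {\rm Per}(E_t,\cdot)\,\d t$, I get
\[
c\,|u(x)-u(y)| \;\le\; \int_\R \!\!\int R_{x,y}^L\,\d{\rm Per}(E_t,\cdot)\,\d t \;=\; \int R_{x,y}^L\,\d|Du| \;\le\; \int R_{x,y}^L\,\lip u\,\d\mm \;=\; \int \lip u\,\d \mm_{x,y}^L,
\]
which is exactly (PtPI), so Proposition~\ref{prop:Poincaré_equivalences_pointwise} gives (PI).

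\textbf{(PI)$\Rightarrow$(BH$^e$) and closing the cycle.} For a separating set $\Omega$ with $B_r(x) \subseteq \Omega$ and $B_r(y) \subseteq \Omega^c$, approximate $\chi_\Omega$ by the Lipschitz functions $u_\varepsilon(z) := \max\{0, 1 - \varepsilon^{-1}\sfd(z,\Omega)\}$, which agree with $\chi_\Omega$ on $B_r(x) \cup B_r(y)$ for small $\varepsilon$. Applying (PtPI) to $u_\varepsilon$ gives $1 \le C \int \lip u_\varepsilon\,\d\mm_{x,y}^L$; since $\lip u_\varepsilon \le \varepsilon^{-1}\chi_{\{\sfd(\cdot,\Omega)<\varepsilon\}\setminus \Omega}$ (and vanishes on $\Omega$), passing to the limit and using the definition of Minkowski content produces $\widetilde{(\mm_{x,y}^L)^+}(\Omega) \ge (2C)^{-1}$; the standard doubling-and-coarea argument then upgrades this to a lower bound on ${\rm Per}_{\mm_{x,y}^L}(\Omega)$ and on $\codH{1}_{\mm_{x,y}^L}(\partial^e \Omega)$, which is (BH$^e_{\rm R}$). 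Rewriting the weighted measures via the R-identity yields (BH$^e$), and the trivial inclusions and universal comparisons among ${\rm Per}, \mm^+, \codH{1}(\partial^e\cdot), \codH{1}(\partial\cdot)$ propagate the lower bound to all eight conditions, which in turn all imply (BP) and hence (PI) by the previous step.

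\textbf{Main obstacle.} The delicate point is the limit passage $\int \lip u_\varepsilon\,\d\mm_{x,y}^L \to$ something genuinely controlled by a codimension-$1$ energy of $\partial\Omega$: the Riesz weight $R_{x,y}^L$ is only lower semicontinuous, blows up at the poles, and is not constant across $\partial\Omega$, so one must both truncate away from $x,y$ (using $B_r(x)\subseteq\Omega$, $B_r(y)\subseteq\Omega^c$ to guarantee boundedness of $R_{x,y}^L$ on a neighborhood of $\partial\Omega$) and carefully compare the ``lip-relaxed'' perimeter appearing naturally from $u_\varepsilon$ with the actual perimeter ${\rm Per}_{\mm_{x,y}^L}$. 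This is where doubling of the weighted measure $\mm_{x,y}^L$ (guaranteed locally away from the poles by the doubling of $\mm$) and the equivalence ${\rm Per}_{f\mm} \simeq \widetilde{(f\mm)^+}$ for closed sets enter; it is also the step where the truncation parameter $L$ must be chosen large enough that $\partial\Omega \subseteq B_{x,y}^L$ can be arranged, up to discarding separating sets with boundary too far from the segment $x\sim y$ (which can be handled by a preliminary reduction using the doubling condition).
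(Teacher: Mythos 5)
Your overall architecture (test the pointwise inequality (PtPI) with Lipschitz approximations of $\chi_\Omega$ for the forward direction; coarea on superlevel sets for the converse; comparisons among the surface energies to link the eight conditions) is the same as the paper's, and your (BP)$\Rightarrow$(PI) step and your derivation of a Minkowski-content lower bound from (PI) are both correct. The genuine gap is the claim that ``the standard doubling-and-coarea argument then upgrades'' the bound $\widetilde{(\mm_{x,y}^L)^+}(\Omega)\ge (2C)^{-1}$ to lower bounds on ${\rm Per}_{\mm_{x,y}^L}(\Omega)$ and on $\codH{1}_{\mm_{x,y}^L}(\partial^e\Omega)$. Among these energies the Minkowski content is the \emph{largest}: the only universal (doubling-only) comparisons available are ${\rm Per}(\Omega,\cdot)\le C\,\widetilde{\mm^+}$ (from the relaxation definition tested on your $u_\varepsilon$) and $\codH{1}(A)\le C\,\widetilde{\mm^+}(A)$ (covering argument; the paper uses exactly this to get (IsoH)$\Rightarrow$(IsoM)). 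Hence a lower bound on the Minkowski content is the \emph{weakest} of the conditions and cannot be propagated downward; coarea applied to $\sfd(\cdot,\Omega)$ only yields upper bounds on the perimeters of tubular neighbourhoods. Likewise the bound ${\rm Per}_\mm\le C\,\codH{1}_\mm\llcorner\partial^e(\cdot)$ that you list as a ``universal pointwise bound'' is the Federer-type characterization of Ambrosio and Lahti \cite{Amb02,Lahti2020}, a theorem \emph{about} ${\sf PI}$ spaces, not a general fact about doubling spaces; as written you are using it before having established (PI)-type control.

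What is actually needed for (PI)$\Rightarrow$(BP) is to show that \emph{every} sequence $u_i\to\chi_\Omega$ in $L^1_{\rm loc}$ satisfies $\liminf_i\int R_{x,y}^L\,\lip u_i\,\d\mm\ge c$, not just the special sequence $u_\varepsilon$ (the perimeter is an infimum over approximating sequences, so a lower bound along one sequence proves nothing). This is done by extending (PtPI) to BV functions — equivalently, applying (PtPI) to $u_i$ at density points $x'\in B_r(x)$, $y'\in B_r(y)$ where $u_i(x')\to 1$ and $u_i(y')\to 0$ along a subsequence, and comparing $\mm_{x',y'}^{L}$ with $\mm_{x,y}^{L'}$ — which is precisely what the paper alludes to with ``up to clarifying what the right-hand side means'' when $u=\chi_\Omega$. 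Reaching the strongest conditions (BH$^e$), (BH$^e_{\rm R}$) then genuinely requires the representation ${\rm Per}(\Omega,\cdot)\asymp\codH{1}_\mm\llcorner\partial^e\Omega$ valid in ${\sf PI}$ spaces. Two minor further points: the ``R-rewriting'' $\codH{1}_{\mm_{x,y}^L}=R_{x,y}^L\,\codH{1}_\mm$ is only a two-sided comparison (continuity of the kernel away from the poles, plus an adjustment of $L$ to absorb the jump of the truncation at $\partial B_{x,y}^L$), not an identity; and the converse implications for (BMC), (BH), (BH$^e$) need the coarea \emph{inequalities} for the Minkowski content and for $\codH{1}$, not the BV coarea formula you invoke for (BP).
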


There are other conditions involving the notion of capacity and approximate modulus of family of paths, but for the sake of brevity we avoid their treatment.

Our conditions are related to previous works on ${\sf PI}$ spaces:

\begin{itemize}
    \item Our condition (BMC) is related to the main proof in \cite{FasslerOrponen19}. As we explained in item 2) after the statement of Theorem \ref{thm:characterization_1}, the proof in \cite{FasslerOrponen19} relies on a duality on a combinatorial level. In the discrete setting, this is related to the relation between capacity of a cut and flows on the graph. In the continuous level, the flow on a graph has the natural counterpart with the notion of pencil of curves, while we give a continuous of capacity of a cut, that is the Minkowski content of a separating set.

    \begin{center}
    \begin{tabular}{ | m{8em} | m{4cm}| m{4cm} | } 
    \hline
    & & \\
    Graph & Cuts in a graph & Flows in a graph \\ 
    & & \\
    \hline
    & & \\
    Metric space & Separating sets & Pencil of curves \\ 
    & & \\
    \hline
    \end{tabular}
    \end{center}
    
    \item If $(\X,\sfd,\mm)$ is $s$-Ahlfors regular, with $s \ge 1$, the condition (BH) can be easily interpreted from a more classical measure-theoretic point of view. Here, it implies that for every $x,y \in \X$ and every $\Omega \in {\rm SS}_{\rm top}(x,y)$, $\mathcal{H}^{s-1}(\partial \Omega) >0$. Thus in particular the Hausdorff dimension of $\partial \Omega$ is at least $s-1$. In other words, the Poincar\'{e} inequality implies that the boundary of separating sets has Hausdorff dimension at least $s-1$. This is also a consequence of the relative isoperimetric inequality, that will be discussed in the next section. In this result, the lower bound on the codimension-1 Hausdorff measure $\codH{1}_\mm$ of the measure-theoretic boundary of the separating sets depends on the points $x,y$. Instead, measuring the boundary of separating set with the codimension-1 Hausdorff measure weighted with the Riesz potential gives a lower bound that is independent of $x,y$.

\end{itemize}

The proof of our results relies on a combination of many preliminary results in metric analysis. We can explain in informal terms how the PI condition implies all the condition listed in our characterization in Theorem \ref{theo:main-intro-p=1-riproposed}. The characterization in Proposition \ref{prop:Poincaré_equivalences_pointwise} gives (PI) is equivalent to (PtPI). If one is allowed in the (PtPI) to take as $u$ the indicator function of a separating set, up to clarifying what the right-hand side means, one would be able to conclude.

On the other hand, each of the condition in the Theorem implies the (PI) condition by means of coarea-type formulas. Indeed, given a Lipschitz function $u$ and two points $x,y \in \X$ with $u(x)<u(y)$
\begin{equation*}
    \left\{ u >t \right\} \in \SS_{\textup{top}}(x,y).
\end{equation*}

Instead of giving an idea of the proofs of all the implications in the Theorem, we focus on the characterization (PI) $\Leftrightarrow$ (\text{BMC}) in Section \ref{sec:a_more_geometric_characterization}.

\subsection{A proof in \texorpdfstring{$\mathbb{R}^d$}{Rd} via separating sets}

\label{sec:proof_euclidean_case}

The main reason behind our investigation in \cite{CaputoCavallucci2024} is to look for new examples of ${\sf PI}$ spaces. This is the content of a forthcoming project \cite{CaputoCavallucciWald2025}. In this work, we can give an alternative proof that the first Heisenberg group $\mathbb{H}^1$, endowed with the measure $\mathcal{H}^4$, is a ${\sf PI}$ space using potential-theoretic techniques. The same techniques can be easily adapted in the Euclidean case and we provide the detailed proof here in this simplified case.

We consider as a toy model the Euclidean space $\mathbb{R}^d$ with $d \ge 2$ (the arguments can be easily adapted to $d=1$). We associate the metric measure space $(\mathbb{R}^d,|\cdot|,\mathcal{L}^d)$, where $|\cdot|$ is the Euclidean distance and $\mathcal{L}^d$ is the $d$-dimensional Lebesgue measure. We prove that $\mathbb{R}^d$ is a ${\sf PI}$ space using the condition on separating sets. 
As we previously saw, in this specific case, the Riesz kernel takes the form
\begin{equation*}
    R_x(z)=\omega_d^{-1} |x-z|^{1-d}.
\end{equation*}

Morever, we consider the Green function of the Laplacian in $\mathbb{R}^d$ with pole at $x$
\begin{equation*}
    G_x(z):=\begin{cases}
    -\frac{1}{2\pi}\ln(|x-z|) &\text{if }d=2,\\
    \frac{1}{d(d-2)\omega_d} |x-z|^{2-d}&\text{if }d \ge 3.
    \end{cases}
\end{equation*}
which satisfies $-\Delta G_x = \delta_x$ in the sense of distribution. 
Moreover, $G_x$ and $R_x$ are related by the following identity
\begin{equation}
\label{eq:gradient_green_equal_to_riesz}
    |\nabla G_x|(z)= d^{-1} R_x(z)\quad\text{for every }z \in \mathbb{R}^d.
\end{equation}

We prove that there exist $c >0$ and $L \ge 1$ such that  $\int R_{x,y}^L\,\d {\rm Per}(\Omega, \cdot) \ge c$ for every couple of points $x,y \in \X$, $x\neq y$ and $\Omega \in \SS_{\textup{top}}(x,y)$. By Theorem \ref{theo:main-intro-p=1-riproposed}, this implies that $\mathbb{R}^d$ is a ${\sf PI}$ space.

Let us fix $x,y \in \X$ and $\Omega\in \SS_{\textup{top}}(x,y)$.
If $\int R_{x,y}^L\,\d {\rm Per}(\Omega, \cdot)=\infty$, there is nothing to prove, so we may assume that $\int R_{x,y}^L\,\d {\rm Per}(\Omega, \cdot)<\infty$. In such a case, it is not hard to prove ${\rm Per}(\Omega, B_{2L\sfd(x,y)}(x))<\infty$.

We make a further semplification only for the sake of this presentation. We assume that $\Omega$ is an open set with smooth boundary. The general case of sets of finite perimeter only has the more technical complications of dealing with the notion of normal to the boundary and the more general Gauss-Green formula that holds for this class of sets. 

\begin{figure}
    \includegraphics[scale=0.4]{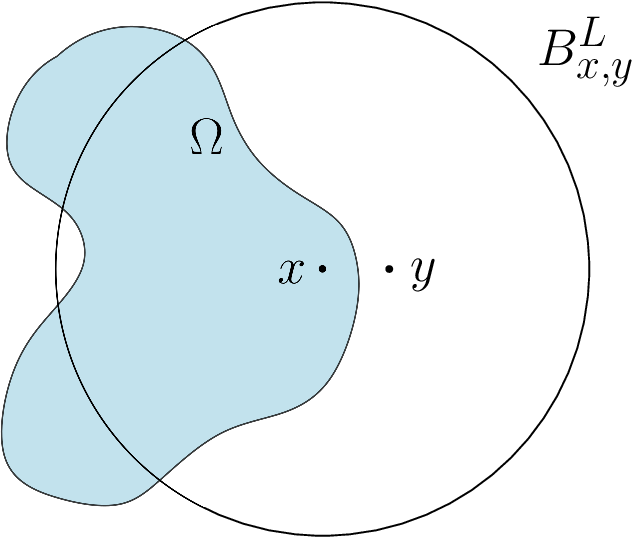}
    \caption{Example of the setting.}
\end{figure}

By the explicit formula of $R_x$ in the Euclidean space, we have that there exists a constant $c_0>0$ such that 
\begin{equation}
\label{eq:constant_energy_of_balls}
    d \int_{\partial B_r(p)} \langle \nabla G_p,\nu_{B_r(p)}\rangle\,\d \mathcal{H}^{d-1}=\int_{\partial B_r(p)} R_p\,\d \mathcal{H}^{d-1}=c_0\qquad \text{for every }p \in \X,\text{ and }r >0.
\end{equation}
Here $\nu_A(x)$ is the unit outer normal to the boundary of $A$ at $x \in \partial A$ and $A$ is an open set with smooth boundary.
Morever, given $B_r(x) \subset \Omega$, we have
\begin{equation}
\label{eq:other_set_larger_energy_than_ball}
    \int_{\partial (\Omega \cap B_{x,y}^L)} R_x\,\d \mathcal{H}^{d-1} \ge \int_{\partial B_r(x)} R_x \,\d \mathcal{H}^{d-1}=c_0.
\end{equation}
This follows from the following computation, based on \eqref{eq:gradient_green_equal_to_riesz}
\begin{equation*}
\begin{aligned}
    0 &= \int_{\Omega \cap B_{x,y}^L \setminus \overline{B_r(x)}} \Delta G_x\,\d \mathcal{L}^d = \int_{\partial (\Omega \cap B_{x,y}^L)} \langle \nabla G_x, \nu_{\partial \Omega}\rangle\,\d \mathcal{H}^{d-1}-\int_{\partial B_r(x)} \langle \nabla G_x, \nu_{B_r(x)} \rangle\,\d \mathcal{H}^{d-1}\\
    & \stackrel{\eqref{eq:constant_energy_of_balls}}{\le} \int_{\partial (\Omega \cap B_{x,y}^L)} |\nabla G_x|\,\d \mathcal{H}^{d-1}-d^{-1}\int_{\partial B_r(x)} R_x \,\d \mathcal{H}^{d-1} \\
    &\stackrel{\eqref{eq:gradient_green_equal_to_riesz}}{=} d^{-1}\int_{\partial (\Omega \cap B_{x,y}^L)} R_x\,\d \mathcal{H}^{d-1} - d^{-1}\int_{\partial B_r(x)} R_x \,\d \mathcal{H}^{d-1}.
\end{aligned}
\end{equation*}

Notice that, for every $L>0$, there exists $\delta_L$, independent of $x,y$, such that 
\begin{equation}
\label{eq:smallness_Riesz_change_pole}
    \int_{\partial B_{x,y}^L} |R_y-R_x|\,\d \mathcal{H}^{d-1} \le \delta_L.
\end{equation}
Moreover, $\delta_L \to 0^+$ as $L \to \infty$.

We can now compute
\begin{equation*}
\begin{aligned}
    &\int R_{x,y}^L\,\d {\rm Per}(\Omega,\cdot)= \int_{\partial \Omega \cap B_{x,y}^L} R_x\,\d \mathcal{H}^{d-1} + \int_{\partial \Omega \cap B_{x,y}^L} R_y\,\d \mathcal{H}^{d-1}\\
    & = \int_{\partial (\Omega \cap B_{x,y}^L)} R_x\,\d \mathcal{H}^{d-1} - \int_{\Omega \cap \partial B_{x,y}^L} R_x\,\d \mathcal{H}^{d-1} + \int_{\partial (\Omega^c \cap B_{x,y}^L)} R_y\,\d \mathcal{H}^{d-1} - \int_{\Omega^c \cap \partial B_{x,y}^L} R_y\,\d \mathcal{H}^{d-1}\\
    &\stackrel{\eqref{eq:other_set_larger_energy_than_ball}}{\ge} 2c_0- \int_{\Omega \cap \partial B_{x,y}^L} R_x\,\d \mathcal{H}^{d-1} - \int_{\Omega^c \cap \partial B_{x,y}^L} R_y\,\d \mathcal{H}^{d-1} \stackrel{\eqref{eq:smallness_Riesz_change_pole}}{\ge} 2c_0- \int_{\partial B_{x,y}^L} R_x\,\d \mathcal{H}^{d-1} - \delta_L\\
    & =c_0-\delta_L.
\end{aligned}
\end{equation*}

Choosing $L$ sufficiently large, we have that $\delta_L < c_0/2$, thus $\int R_{x,y}^L\,\d {\rm Per}(\Omega,\cdot) \ge c_0/2$, thus concluding.

\subsection{Relative isoperimetric inequality}

The Poincar\'{e} inequality is related to the validity of the relative isoperimetric inequality. This is due to different contributions \cite{Mir03,Amb02,KorteLahti2014,Lahti2020}, that we briefly review in this section.

In informal terms, the relative isoperimetric inequality says the following. Fix a set function $\sigma$ which `measures the surface measure' of a given set. There exists a constant $C>0$ such that for every Borel set $E$ and every ball $B_{r}(x)$
\begin{equation*}
    \frac{\min\left\{ \mm(E \cap B_r(x)), \mm(E^c \cap B_r(x)) \right\}}{\mm(B_r(x))} \le C r \frac{\sigma(\partial E \cap B_{\lambda r}(x))}{\mm(B_{\lambda r}(x))}.
\end{equation*}
We review characterizations of ${\sf PI}$ spaces in terms of relative isoperimetric inequalities where the function $\sigma$ varies among the energies defined at the beginning of Section \ref{sec:codimension_1}.

Miranda in \cite{Mir03} (see also \cite{Amb02}) proved that a necessary condition in a ${\sf PI}$ space is the validity of the following relative isoperimetric inequality. There exists a constant $C >0$ such that for all Borel sets $E\subset\X$ and every ball $B_r(x) \subset \X$
\begin{equation*}
    \frac{\min\left\{ \mm(E \cap B_r(x)), \mm(E^c \cap B_r(x)) \right\}}{\mm(B_r(x))} \le C r \frac{{\rm Per}(E, B_{\lambda r}(x))}{\mm(B_{\lambda r}(x))}.
\end{equation*}
This is a consequence of the fact that the definition of ${\sf PI}$ spaces can be equivalently formulated with BV functions $u$ replacing the class of BV functions and with $|D u|(B_{\lambda r}(x))/\mm(B_{\lambda r}(x))$ in the inequality replacing the averaged integral of $\lip u$. Then it is enough to test the inequality with the indicator of a set of finite perimeter. A crucial estimate is the simple algebraic computation
\begin{equation}
\begin{aligned}
\label{eq:equivalence_lefthandside}
    \frac{1}{2} \frac{\min \left\{ \mm(E \cap B_r(x)),\mm(E \setminus B_r(x)) \right\}}{\mm(B_r(x))} &\le \dashint_{B_r(x)} \left|\chi_E -\dashint_{B_r(x)} \chi_E\,\d \mm \right|\,\d \mm\\
    &\le 2 \frac{\min \left\{ \mm(E \cap B_r(x)),\mm(E \setminus B_r(x)) \right\}}{\mm(B_r(x))}.
\end{aligned}
\end{equation}

The sufficiency of the validity of the above relative isoperimetric inequality to get that $\X$ is a ${\sf PI}$ space follows by applying the inequality to the superlevel sets of a Lipschitz function $u$, say $E_t:=\{ u>t \}$. Then, as proven in \cite{KorteLahti2014}, one integrate with respect to the $t$ variable using coarea formula for BV functions (\cite[Proposition 4.2]{Mir03}) and \eqref{eq:equivalence_lefthandside}.

However, one may consider other energies to measure the boundary, as the codimension-1 Hausdorff measure and the Minkowksi content. This leads to the following definitions {\cite[Section 1]{KorteLahti2014}}.

    \begin{itemize}
        \item[({\rm IsoM})] there exist $C>0$, $\lambda \ge 1$ such that
        \begin{equation*}
          \frac{\min \left\{ \mm(B_r(x) \cap E),\mm(B_r(x)\setminus E)\right\}}{\mm(B_r(x))} \le C r \frac{\widetilde{\mm^+}(B_{\lambda r}(x) \cap \partial E)}{\mm(B_{\lambda r}(x))}   
        \end{equation*}
        for all $\mm$-measurable set $E$ and $x \in \X$, $r>0$;
        \item[({\rm IsoM$^e$})] there exist $C>0$, $\lambda \ge 1$ such that
        \begin{equation*}
          \frac{\min \left\{ \mm(B_r(x) \cap E),\mm(B_r(x)\setminus E)\right\}}{\mm(B_r(x))} \le C r \frac{\widetilde{\mm^+}(B_{\lambda r}(x) \cap \partial^e E)}{\mm(B_{\lambda r}(x))}   
        \end{equation*}
        for all $\mm$-measurable set $E$ and $x \in \X$, $r>0$;
        \item[({\rm IsoH})] there exist $C>0$, $\lambda \ge 1$ such that
        \begin{equation*}
          \frac{\min \left\{ \mm(B_r(x) \cap E),\mm(B_r(x)\setminus E)\right\}}{\mm(B_r(x))} \le C r \frac{\codH{1}(B_{\lambda r}(x) \cap \partial E)}{\mm(B_{\lambda r}(x))}   
        \end{equation*}
        for all $\mm$-measurable set $E$ and $x \in \X$, $r>0$.
    \end{itemize}

These conditions characterize ${\sf PI}$ spaces. 

\begin{proposition}[\cite{KorteLahti2014}]
    Let $(\X,\sfd,\mm)$ be a doubling metric measure space. Then $\X$ is a ${\sf PI}$ space if one (and thus all) of the conditions ({\rm IsoM}), ({\rm IsoM$^e$}), ({\rm IsoH}) holds.
\end{proposition}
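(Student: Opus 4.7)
The plan is to prove the nontrivial implication, namely that each of \textup{(IsoH)}, \textup{(IsoM$^e$)}, \textup{(IsoM)} forces the $1$-Poincar\'e inequality; the converse direction is essentially what Miranda proved and is already sketched in the paragraph preceding the statement (one uses the BV formulation of PI and tests on indicators of sets of finite perimeter, together with the elementary two-sided bound \eqref{eq:equivalence_lefthandside}). For the nontrivial direction I would apply the given isoperimetric inequality to the super-level sets $E_t := \{u > t\}$ of an arbitrary $u \in \Lip(\X)$ and integrate in $t$, exactly dual to Miranda's argument.

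More concretely, fix $u \in \Lip(\X)$ and a ball $B_r(x)$, and let $m$ be a median of $u$ on $B_r(x)$ (well-defined thanks to doubling). A layer-cake computation gives
\begin{equation*}
    \int_{-\infty}^{\infty} \min\bigl\{\mm(E_t \cap B_r(x)),\, \mm(B_r(x) \setminus E_t)\bigr\}\,\d t = \int_{B_r(x)} |u - m|\,\d \mm,
\end{equation*}
since for $t \ge m$ the minimum equals the first term and for $t < m$ the second. Applying the assumed isoperimetric inequality pointwise in $t$ to $E_t$ (open, hence Borel) and integrating yields
\begin{equation*}
    \dashint_{B_r(x)} |u - m|\,\d \mm \le \frac{C r}{\mm(B_{\lambda r}(x))} \int_{-\infty}^{\infty} \sigma\bigl(\partial E_t \cap B_{\lambda r}(x)\bigr)\,\d t,
\end{equation*}
where $\sigma \in \{\widetilde{\mm^+},\, \codH{1}\}$ is the surface energy appearing in the chosen hypothesis. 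At this stage I would invoke the coarea inequality for $\sigma$ applied to $u$ on the open set $B_{\lambda r}(x)$ (namely \cite[Prop.\ 5.1]{AmbDiMarGig17} or \cite[Prop.\ 3.7]{CaputoCavallucci2024} for $\codH{1}$, and \cite[Prop.\ 3.5]{KorteLahti2014} or \cite[Lemma 3.2]{AmbDiMarGig17} for the Minkowski content), which bounds the $t$-integral on the right by a universal multiple of $\int_{B_{\lambda r}(x)} \lip u\,\d \mm$. Passing from the median $m$ to the integral average costs only a factor of $2$ via $|\bar u - m| \le \dashint_{B_r(x)} |u - m|\,\d \mm$ and the triangle inequality, and delivers \eqref{pPoincare-lip}.

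The main obstacle I expect is precisely the coarea step: the surface energies $\widetilde{\mm^+}$ and $\codH{1}$ of the topological boundary $\partial E_t$ are \emph{a priori} less well-behaved than the BV perimeter, and extracting a clean inequality against $\int \lip u\,\d\mm$ after integration in $t$ is exactly the subtle point where doubling and the cited coarea results are indispensable. Once these coarea inequalities are invoked as black boxes, the three variants are handled uniformly, only the surface measure $\sigma$ changing between them. This structure also clarifies the \emph{thus all} parenthetical of the statement: each of the three conditions implies PI by the above argument, while PI implies all three via Miranda's direction together with the standard comparisons between ${\rm Per}$, $\codH{1}$ and $\widetilde{\mm^+}$ in doubling spaces alluded to at the beginning of Section \ref{sec:codimension_1}.
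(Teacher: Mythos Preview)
Your core argument for the key direction (each isoperimetric condition implies \textup{(PI)} via super-level sets and coarea) is correct and matches the paper's approach for \textup{(IsoM)}~$\Rightarrow$~\textup{(PI)}. The organization differs slightly: rather than running three separate coarea arguments, the paper first reduces \textup{(IsoM$^e$)} and \textup{(IsoH)} to \textup{(IsoM)} by the trivial inclusion $\partial^e E\subset\partial E$ and by the general comparison $\codH{1}\le C\,\widetilde{\mm^+}$, and then applies the Minkowski coarea inequality only once. Your uniform treatment works too, since the relevant coarea inequalities for $\codH{1}$ and $\widetilde{\mm^+}$ are both available, but the paper's reduction is a bit more economical.

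The places where your outline and the paper genuinely diverge are on the converse side. First, the paper does \emph{not} obtain \textup{(PI)}~$\Rightarrow$~\textup{(IsoH)} from Miranda plus a ${\rm Per}$--$\codH{1}$ comparison; instead it uses the ``classical'' construction (as in item~1) after Theorem~\ref{thm:characterization_1}) of a Lipschitz function with a prescribed upper gradient to approximate $\codH{1}(\partial E\cap B_{\lambda r}(x))$ directly. Your suggested route would implicitly invoke the Ambrosio representation ${\rm Per}(E,\cdot)\simeq \codH{1}\llcorner\partial^e E$, which already \emph{requires} the \textup{(PI)} hypothesis---logically fine, but heavier than what the paper actually does. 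Second, the equivalence \textup{(IsoM)}~$\Leftrightarrow$~\textup{(IsoM$^e$)} is handled in the paper without passing through \textup{(PI)}: one replaces $E$ by its measure-theoretic interior $\tilde E$, for which $\partial\tilde E=\overline{\partial^e E}$ and hence $\widetilde{\mm^+}(\partial\tilde E\cap B)=\widetilde{\mm^+}(\partial^e E\cap B)$, while the left-hand side is unchanged. Your sketch ``\textup{(PI)} implies all three via comparisons'' glosses over this step; it is recoverable (again through the Ambrosio representation), but the paper's direct trick is both simpler and more transparent.
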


\begin{proof}
We sketch the proof. ({\rm IsoM$^e$}) implies ({\rm IsoM}) because $\partial^e E\subset \partial E$.
({\rm IsoH}) implies ({\rm IsoM}) because $\codH{1}(A)\le C \widetilde{\mm^+}(A)$ for all $A \subset \X$. ({\rm IsoM}) implies that $\X$ is a ${\sf PI}$ space by arguing as in the case of the perimeter, by integrating the condition on the superlevel set of a Lipschitz function. In this case, one applies the coarea inequality for the Minkowski content.

The condition ({\rm IsoM}) implies ({\rm IsoM$^e$}) via the following more delicate argument. The terms in both sides in ({\rm IsoM$^e$}) remains unchanged if we change $E$ with $\tilde{E}$ such that $\mm(E \Delta \tilde{E})=0$. Selecting as $\tilde{E}$ the measure-theoretic interior of $E$, one has that $\partial \tilde{E}=\overline{\partial^e E}$. By the very definition of $\widetilde{\mm^+}$, we have
\begin{equation*}
    \widetilde{\mm^+}(B_{\lambda r}(x) \cap \partial \tilde{E}) = \widetilde{\mm^+}(B_{\lambda r}(x) \cap \partial^e E)
\end{equation*}
which, together with the assumption, gives the conclusion (see \cite[Theorem 3.6]{KorteLahti2014}).

The (PI) condition implies ({\rm IsoH}) by the same classical argument of analysis on metric spaces used in item i) below the statement of Theorem \ref{thm:characterization_1}, that in this case reads as follows. Given a Borel function $g$, we find a Lipschitz function $u$ such that $g$ is its upper gradient. In this case, the $L^1$ norm of $g$ is an approximation (up to a multiplicative constant) of $\codH{1}(B_{\lambda r}(x) \cap \partial E)$ and $u$ is an approximation of $\chi_E$. For more details, we refer the reader to \cite[Theorem 3.13]{KorteLahti2014}.
\end{proof}

An expert reader in geometric measure theory will notice that ({\rm IsoH}) is not the most natural condition involving the codimension-1 Hausdorff measure.

This is a consequence of the celebrated De Giorgi-Federer characterization of sets of finite perimeter in the Euclidean space. Indeed, Federer (see \cite[Section 4.5.11]{Federer69}), after De Giorgi, gave the following characterization in $\mathbb{R}^d$. Given an open set $\Omega \subset \mathbb{R}^d$ and an $\mathcal{L}^d$-measurable set $E\subset \mathbb{R}^d$ of finite perimeter, we have that ${\rm Per}(E,\Omega)< \infty$ if and only if $\mathcal{H}^{d-1}(\partial^e E \cap \Omega)< \infty$.

Since $\codH{1}$ is the natural replacement of $\mathcal{H}^{d-1}$ that takes into account the local change of dimension of the metric space, a natural formulation of the relative isoperimetric inequality is given by the following condition.
\begin{itemize}
    \item[({\rm IsoH$^e$})] there exist $C>0$, $\lambda \ge 1$ such that
        \begin{equation*}
          \frac{\min \left\{ \mm(B_r(x) \cap E),\mm(B_r(x)\setminus E)\right\}}{\mm(B_r(x))} \le C r \frac{\codH{1}(B_{\lambda r}(x) \cap \partial^e E)}{\mm(B_{\lambda r}(x))}   
        \end{equation*}
        for all $\mm$-measurable set $E$ and $x \in \X$, $r>0$.
\end{itemize}
Lahti proved in \cite[Corollary 5.4]{Lahti2020} that ({\rm IsoH$^e$}) is equivalent to the fact that $\X$ is a ${\sf PI}$ space. The main tool is the validity of a De Giorgi-Federer characterization of sets of finite perimeter in metric spaces, as proved in \cite{Lahti2020}, building upon preliminary tools in \cite{Lahti17}. It takes the following form. Let $(\X,\sfd,\mm)$ be a ${\sf PI}$ space. Let $\Omega \subset \X$ be an open set and let $E \subset \X$ be $\mm$-measurable. Then, ${\rm Per}(E,\Omega)<\infty$ if and only if $\codH{1}(\partial^e E \cap \Omega)< \infty$.

\section{A more geometric relation between obstacle-avoidance principle and energy of separating sets}
\label{sec:a_more_geometric_characterization}

The goal of this section is twofold. We show a proof of Proposition \ref{prop:characterization_3}. The proof uses as a tool the (BMC) condition introduced Section \ref{sec:separating_sets} and, indeed, we also prove in the same proof that (PI) is equivalent to (BMC) (an equivalence of Theorem \ref{theo:main-intro-p=1-riproposed}). The second aim is to directly relate (BMC) to 1-set-connectedness. This will be proved in Theorem \ref{thm:main_theorem_CC2}. The main tool for this second goal is the position function, that we define and explain in Section \ref{sec:position_function}.

We define the following auxiliary quantity.

$$\SR_{x,y}(A) := \frac{\mm_{x,y}^L(A)}{\width_{x,y}(A)}.$$

The $1$-set-connectedness can be recast into the following condition. There exist constants $c>0$, $L \ge 1$ such that 
\begin{equation*}
    \SR_{x,y}(A) \ge c
\end{equation*}
for every Borel set $A \in \X$.

The first result of this section is the relate the (PI) condition, the obstacle-avoidance condition with the Riesz kernel and the Minkowski content of separating sets. 

\begin{theorem}[{\cite[Theorem 1.4]{CaputoCavallucci2024II}}]
    \label{theo:main-intro-p=1}
    Let $(\X,\sfd,\mm)$ be a doubling metric measure. Then the following conditions are quantitatively equivalent:
    \begin{itemize}
        \item[(i)] $(\X,\sfd,\mm)$ is a ${\sf PI}$ space;
        \item[(ii)] the space $(\X,\sfd,\mm)$ is $(C,L)$ $1$-set-connected; 
        \item[(iii)] there exist $C,L$ such that \eqref{eq:defin_set_connectedness_bounded_intro} is satisfied by every closed subset $A\subseteq \X$;
        \item[(iv)] the space $(\X,\sfd,\mm)$ satisfies \textup{(BMC)}.
    \end{itemize}

\end{theorem}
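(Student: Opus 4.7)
My plan is to establish the equivalences via the cyclic chain (i) $\Rightarrow$ (ii) $\Rightarrow$ (iii) $\Rightarrow$ (iv) $\Rightarrow$ (i), using Theorem~\ref{theo:main-intro-p=1-riproposed} to close the loop since the equivalence (PI) $\Leftrightarrow$ (BMC) appears there.

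For (i) $\Rightarrow$ (ii), I would invoke Theorem~\ref{thm:characterization_1}: doubling plus PI gives, for every pair $x, y \in \X$, a pencil $\alpha \in \mathcal{P}(\Gamma_{x,y}^L)$ satisfying \eqref{eq:pencil}. Testing that inequality against the nonnegative Borel function $g = \chi_A$ for an arbitrary Borel $A \subseteq \X$ yields
\[
    \int_{\Gamma_{x,y}^L} \ell(\gamma \cap A)\,\d\alpha(\gamma) \le C_1\, \mm_{x,y}^L(A).
\]
Since $\alpha$ is a probability measure on $\Gamma_{x,y}^L \subseteq \Gamma_{x,y}$, the infimum of the integrand is at most the average, so $\width_{x,y}(A) \le C_1\, \mm_{x,y}^L(A)$. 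The implication (ii) $\Rightarrow$ (iii) is immediate since closed sets are Borel.

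The geometric core of the argument is (iii) $\Rightarrow$ (iv). Fix $\Omega \in \SS_{\textup{top}}(x, y)$ with $B_\rho(x) \subseteq \Omega$ and $B_\rho(y) \subseteq \Omega^c$, and, for $0 < 2\epsilon < r < \rho$, I introduce the closed shell
\[
    F_\epsilon^r := \{p \in \X : \epsilon \le \sfd(p, \Omega) \le r - \epsilon\} \subseteq B_r(\Omega) \setminus \Omega.
\]
For any $\gamma \in \Gamma_{x,y}$, the composition $g(t) := \sfd(\gamma(t), \Omega)$ is continuous with $g(0) = 0$ and $g(1) \ge \rho > r - \epsilon$. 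By the intermediate value theorem, I can choose the first time $\tau$ with $g(\tau) = r - \epsilon$ and then the last time $\sigma \in [0, \tau)$ with $g(\sigma) = \epsilon$; on $[\sigma, \tau]$ the curve lies in $F_\epsilon^r$, and since $p \mapsto \sfd(p, \Omega)$ is $1$-Lipschitz on $\X$,
\[
    \ell(\gamma \cap F_\epsilon^r) \ge \ell(\gamma|_{[\sigma,\tau]}) \ge \sfd(\gamma(\sigma), \gamma(\tau)) \ge g(\tau) - g(\sigma) = r - 2\epsilon.
\]
Taking the infimum over $\gamma \in \Gamma_{x,y}$ gives $\width_{x,y}(F_\epsilon^r) \ge r - 2\epsilon$. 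Applying (iii) to the closed set $F_\epsilon^r$ yields $r - 2\epsilon \le C\, \mm_{x,y}^L(F_\epsilon^r) \le C\, \mm_{x,y}^L(B_r(\Omega) \setminus \Omega)$. Letting $\epsilon \to 0^+$, dividing by $r$, and taking $\liminf_{r \to 0^+}$ produces $(\mm_{x,y}^L)^+(\Omega) \ge 1/C$, which is (BMC) with a uniform constant.

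The closing implication (iv) $\Rightarrow$ (i) is the equivalence (PI) $\Leftrightarrow$ (BMC) contained in Theorem~\ref{theo:main-intro-p=1-riproposed}. I expect the main obstacle to be the step (iii) $\Rightarrow$ (iv): one must approximate the open annular neighborhood $B_r(\Omega) \setminus \Omega$ from inside by closed level sets of the distance function while preserving a quantitative lower bound on the length that every crossing curve accumulates in the approximant. This is precisely what converts a width inequality restricted to closed sets into a lower bound on the Minkowski content of $\Omega$, and is the only place where the closedness restriction in (iii) needs to be carefully circumvented.
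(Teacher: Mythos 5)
Your proposal is correct and follows essentially the same route as the paper: the pencil of curves tested on $\chi_A$ for (i)$\Rightarrow$(ii), triviality of (ii)$\Rightarrow$(iii), a closed annular set around $\partial \Omega$ whose width is bounded below by its thickness for (iii)$\Rightarrow$(iv), and the coarea-based (BMC)$\Leftrightarrow$(PI) equivalence of Theorem \ref{theo:main-intro-p=1-riproposed} to close the cycle. Your $\epsilon$-retracted shell $F_\epsilon^r$ is a minor technical variant of the paper's choice $\overline{B}_r(\Omega)\setminus \Int(\Omega)$ that neatly sidesteps the paper's preliminary reduction to the case $\mm_{x,y}^L(\partial\Omega)=0$.
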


\begin{proof}
    Condition (i) implies that for every $x,y\in \X$ there exists a pencil of curves $\alpha \in \mathcal{P}(\Gamma_{x,y}^L)$, which gives
    \begin{equation*}
        \width_{x,y}(A)\le\inf_{\gamma \in \Gamma_{x,y}^L} \ell(\gamma \cap A) \le \int \ell(\gamma \cap A)\,\d \alpha(\gamma) \le C \mm_{x,y}^L(A)
    \end{equation*}
    for every Borel set $A \subset \X$. This proves (ii). (ii) implies (iii) is trivial. Now, we prove that (iii) implies (iv). We take a separating set $\Omega \in \SS_{\textup{top}}(x,y)$. If $(\mm_{x,y}^L)^+(\Omega) = +\infty$ there is nothing to prove, so we suppose $(\mm_{x,y}^L)^+(\Omega) < +\infty$, which implies $\mm_{x,y}^L(\partial \Omega) = 0$. Consider the set $A_r:= \overline{B}_r(\Omega) \setminus \Int(\Omega)$, which is a closed subset of $\X$. We have that
    \begin{equation*}
        \width_{x,y}(A_r) \geq r\text{ if }r < \min\{\sfd(\partial \Omega, x), \sfd(\partial \Omega, y) \}
    \end{equation*}
    Indeed, because of this choice of $r$, every curve has to travel from the interior of $\Omega$ to the complement of $\overline{B}_r(\Omega)$. Thus, the length of such curve is at least $r$.
    We can now compute
     $$(\mm_{x,y}^L)^+(\Omega) = \limi_{r \to 0}\frac{\mm_{x,y}^L(\overline{B}_r(\Omega) \setminus \Omega)}{r} = \limi_{r \to 0}\frac{\mm_{x,y}^L(A_r)}{r} \geq \limi_{r \to 0}\frac{\mm_{x,y}^L(A_r)}{\width_{x,y}(A_r)} \geq \inf_{\substack{A \subseteq \X \\ A \text{ closed}}} \SR_{x,y}(A) \geq c,$$
     where we use the assumption in the last inequality.
     Since this is true for every $\Omega \in \SS_{\textup{top}}(x,y)$ we get (iv).
     
     To prove that (iv) implies (i), one repeats the argument of Theorem \ref{theo:main-intro-p=1-riproposed}. More precisely, given a Lipschitz function $u$ and a pair points $x,y\in \X$, we have that the sets $\Omega_t := \lbrace u \geq t \rbrace$ belong to $\SS_\text{top}(x,y)$ for all $t\in (u(x),u(y))$. In doing this, we assumed without loss of generality that $u(x)<u(y)$. We can apply the coarea inequality for the Minkowski content and the measure $\mm_{x,y}^L$ to integrate the (BMC) property and conclude.
    
\end{proof}

This proof is very analytical and, in our opinion, does not show the direct relation between the obstacle-avoidance principle and the Minkowski content of separating sets, that have both a geometric definition. The main goal of \cite{CaputoCavallucci2024II} is to study more carefully this relation.

We prove that the infimum of the separating ratio among closed sets and the infimum of weighted Minkowski content of separating sets can be compared in terms of a constant which is related only the $\Lambda$-quasiconvexity of the space. The equivalence does not pass through the validity of a Poincar\'{e} inequality.

\begin{theorem}
\label{thm:main_theorem_CC2}
Let $(\X,\sfd,\mm)$ be a doubling, path-connected, locally $\Lambda$-quasiconvex metric measure space and let $x,y\in \X$. Then
\begin{equation}
    \label{eq:equivalence_of_minima_intro}
    \Lambda^{-1}\inf_{\Omega \in \SS_{\textup{top}}(x,y)} (\mm_{x,y}^L)^+(\Omega) \le \inf_{A \subseteq \X\, \textup{closed}} \SR_{x,y}(A)\le \inf_{\Omega \in \SS_{\textup{top}}(x,y)} (\mm_{x,y}^L)^+(\Omega).
\end{equation}
In particular, if $(\X,\sfd)$ is path connected and locally geodesic we have
\begin{equation}
    \label{eq:equality_of_minima_geodesic_case}
    \inf_{A \subseteq \X\, \textup{closed}} \SR_{x,y}(A)= \inf_{\Omega \in \SS_{\textup{top}}(x,y)} (\mm_{x,y}^L)^+(\Omega).
\end{equation}
\end{theorem}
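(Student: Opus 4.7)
The plan is to prove the two inequalities in \eqref{eq:equivalence_of_minima_intro} separately. The upper bound is essentially present in the proof of Theorem \ref{theo:main-intro-p=1}: given $\Omega \in \SS_{\textup{top}}(x,y)$ with finite Minkowski content, the closed tube $A_r := \overline{B}_r(\Omega) \setminus \Int(\Omega)$ satisfies $\width_{x,y}(A_r) \ge r$ for $r < \min\{\sfd(\partial\Omega, x), \sfd(\partial\Omega, y)\}$, because any rectifiable curve from $x$ to $y$ must traverse a full segment of length at least $r$ while crossing from $\Int(\Omega)$ out of $\overline{B}_r(\Omega)$. Hence $\SR_{x,y}(A_r) \le \mm_{x,y}^L(A_r)/r$; letting $r \to 0$ and then infimising over $\Omega$ gives the upper bound.

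For the lower bound I would introduce the position function announced in Section \ref{sec:position_function}. Given a closed set $A \subseteq \X$, define
\begin{equation*}
\pos_A(z) := \inf_{\gamma \in \Gamma_{x,z}} \ell(\gamma \cap A), \qquad z \in \X,
\end{equation*}
which is finite by path-connectedness and satisfies $\pos_A(x) = 0$ and $\pos_A(y) = \width_{x,y}(A)$. Concatenating a near-optimal curve for $\pos_A(z)$ with a $\Lambda$-quasigeodesic from $z$ to a nearby $w$, which is available by local $\Lambda$-quasiconvexity, shows that $\pos_A$ is $\Lambda$-Lipschitz. Moreover, since $A$ is closed and $\X$ is locally path-connected by rectifiable curves, each connected component $U$ of the open set $A^c$ is rectifiably path-connected inside $A^c$, so $\pos_A$ is constant on $U$ and $\lip \pos_A$ vanishes on $A^c$. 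Consequently, for every $t \in (0, \width_{x,y}(A))$ the sublevel set $\Omega_t := \{\pos_A \le t\}$ is a closed separating set from $x$ to $y$: a small ball around $x$ lies in $A^c$ where $\pos_A$ is zero, and a small ball around $y$ is excluded by the Lipschitz estimate.

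The sharp constant is then extracted from the combined Lipschitz/localization inclusion
\begin{equation*}
B_r(\Omega_t) \setminus \Omega_t \subseteq B_{\Lambda r}(A) \cap \{t < \pos_A \le t + \Lambda r\},
\end{equation*}
valid for small $r$: if $z$ lies in the left-hand side it is either in $A$ or in a component of $A^c$ on which $\pos_A$ is constant and strictly above $t$, in which case a $\Lambda$-quasigeodesic from $z$ to a nearby point of $\Omega_t$ has length at most $\Lambda r$ and must cross $A$, placing $z$ in $B_{\Lambda r}(A)$. Integrating in $t$ and applying Fubini, Fatou, and the downward continuity $\lim_{r\to 0} \mm_{x,y}^L(B_{\Lambda r}(A)) = \mm_{x,y}^L(A)$ (valid because $\mm_{x,y}^L$ is a finite Borel measure and $A$ is closed) yield
\begin{equation*}
\int_0^{\width_{x,y}(A)} (\mm_{x,y}^L)^+(\Omega_t) \,\d t \le \Lambda \, \mm_{x,y}^L(A).
\end{equation*}
A mean value argument then produces $t^* \in (0, \width_{x,y}(A))$ with $(\mm_{x,y}^L)^+(\Omega_{t^*}) \le \Lambda \SR_{x,y}(A)$, and infimising over closed $A$ closes the lower bound. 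The ``in particular'' statement \eqref{eq:equality_of_minima_geodesic_case} is immediate, since in the locally geodesic regime $\Lambda = 1$ collapses the two displayed inequalities to an equality.

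The main obstacle I foresee is the rigorous verification of the two structural properties of $\pos_A$: both the $\Lambda$-Lipschitz bound and the constancy of $\pos_A$ on each connected component of $A^c$ rely on being able to join arbitrary pairs of nearby points by rectifiable curves that respect the location of $A$ — a step that uses local $\Lambda$-quasiconvexity in an essential way, in combination with the openness of $A^c$ and the ability to replace a merely continuous joining path inside a component of $A^c$ by a rectifiable one staying inside $A^c$.
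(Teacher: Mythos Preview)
Your approach is essentially the paper's: both inequalities are obtained exactly as in the paper, via the tube $A_r$ for the upper bound and via the position function $\pos_A$ for the lower bound, arriving at the same key inequality $\int_0^{\width_{x,y}(A)} (\mm_{x,y}^L)^+(\{\pos_A\le t\})\,\d t\le \Lambda\,\mm_{x,y}^L(A)$ and the same mean-value extraction. The only difference is cosmetic: the paper obtains this inequality by invoking the coarea inequality for the Minkowski content together with $\lip\pos_A\le \Lambda\chi_A$, whereas you unpack that coarea step by hand through the inclusion $B_r(\Omega_t)\setminus\Omega_t\subseteq B_{\Lambda r}(A)\cap\{t<\pos_A\le t+\Lambda r\}$ and Fubini/Fatou.

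One small slip: your justification that $\Omega_t$ is a separating set asserts ``a small ball around $x$ lies in $A^c$'', which is not assumed and can fail (nothing prevents $x\in\Int A$). The correct reason---which you already use for $y$---is simply the Lipschitz bound: $\pos_A(x)=0$ and $\lip\pos_A\le\Lambda$ give $B_{t/\Lambda}(x)\subseteq\{\pos_A\le t\}$. Relatedly, the obstacle you flag at the end is a non-issue: you do not need global rectifiable path-connectedness of components of $A^c$, only that $\pos_A$ is \emph{locally} constant there, which follows immediately from local $\Lambda$-quasiconvexity and closedness of $A$ (a short quasigeodesic between nearby points of $A^c$ stays in $A^c$).
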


We refer the reader to Section \ref{sec:position_function} for the definition of locally $\Lambda$-quasiconvexity.

\subsection{Explanation of the geometric idea behind the proof of Theorem \ref{thm:main_theorem_CC2}} 
We consider Figure \ref{fig:dumbbell} and we compute the infimum of the separating ratio between $x$ and $y$ as in the picture. The choice of the cone-shaped domain $D$ as a competitor is not convenient for the minimization of the separating ratio. Indeed, we can reduce the volume of the set (with respect to $\mm_{x,y}^L$) by keeping unaltered its width. This will create a better competitor for the infimum of the separating ratio, given by the rectangle $R$ in Figure \ref{fig:dumbbell}. The next step is to chop $R$ in $N$ slimmer rectangles $R_i$ of width equal to the $N$-th fraction of the width of $R$. We then select one of such slimmer rectangles, say $R_1$, with the property that its separating ratio is less than or equal to the separating ratio of $R$. This is a consequence of the following formula:
$$\frac{1}{N}\sum_{i=1}^N \SR_{x,y}(R_i) = \SR_{x,y}(R).$$
This formula is trivial in such a case and can be regarded as a discrete counterpart of coarea formula. 

We can repeat this procedure, thus finding thinner and thinner rectangles with decreasing separating ratio. The sequence of thin rectagles converges to a vertical line $L$. This line is the boundary of a separating set, that is the half-plane with boundary $L$ containing $x$. Its Minkowski content can be estimated from above by the separating ratio of the converging rectangles. This is the main idea in the proof of Theorem \ref{thm:main_theorem_CC2} and all the objects involved will be discussed in details in Section \ref{sec:position_function}.

\begin{figure}[h!]
    \centering
    \includegraphics[scale=0.6]{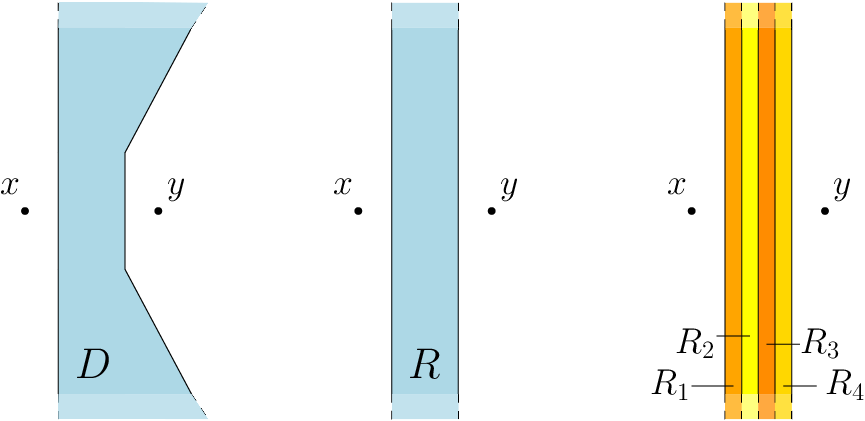}
    \caption{The picture gives an informal explanation of the proof of the Theorem in the the toy example of the two dimensional Euclidean case for a specific choice of $x,y$ and unbounded $D$ in the definition of separating ratio.}
    \label{fig:dumbbell}
\end{figure}

\subsection{The position function and the proof of Theorem \ref{thm:main_theorem_CC2}}
\label{sec:position_function}

We introduce here a function that allows to fibrate a closed set into boundaries of separating sets. This is the so-called \emph{position function} and it is defined as follows (for more details, see \cite[Section 5]{CaputoCavallucci2024II}).

Given a Borel $A \subseteq \X$ and $\gamma \in \Gamma_{x,y}$ we define the \emph{position function along the path $\gamma$ of the set $A$} as 
\begin{equation}
    \label{eq:posgamma_continuous_definition}
    \pos_{\gamma, A} \colon \X \to [0,\infty],\qquad \pos_{\gamma,A}(z):= \inf_{s \in \gamma^{-1}(z)} \ell(\gamma \cap A,s),
\end{equation}
with the usual convention that ${\rm pos}_{\gamma,A}(z)=\infty$ if $z\notin {\rm Im}(\gamma)$. In other words $\pos_{\gamma,A}(z)$ is the length spent by $\gamma$ inside $A$ before reaching for the first time the point $z$.
We define the \emph{position function with respect to the set $A$} as
\begin{equation}
\pos_A \colon \X \to [0,\infty],\qquad \pos_A(z):=\inf_{\gamma \in \Gamma_{x,y}} \pos_{\gamma,A}(z).
\end{equation}

To study the fine properties of such function, we define some more refined connectivity condition of the space:
\begin{itemize}
    \item[-] rectifiable path connected if $\Gamma_{x,y} \neq \emptyset$ for every $x,y\in \X$;
    \item[-] pointwise rectifiable path connected if for every $x\in \X$ there exists $r_x > 0$ such that $\Gamma_{x,y} \neq \emptyset$ for every $y\in B_{r_x}(x)$;
    \item[-] pointwise $\Lambda_x$-quasiconvex at $x\in \X$ if there exist $r_x > 0$ and $\Lambda_x \ge 1$ such that $\Gamma_{x,y}^{\Lambda_x} \neq \emptyset$ for every $y\in B_{r_x}(x)$;
    \item[-] pointwise quasiconvex if it is pointwise quasiconvex at every $x\in \X$;
    \item[-] locally $\Lambda$-quasiconvex if for every $x\in \X$ there exist $r_x > 0$ such that $\Gamma_{y,z}^{\Lambda} \neq \emptyset$ for every $y,z\in B_{r_x}(x)$.
\end{itemize}

To familiarize with the position function, we mention the following proposition that shows an interesting feature of it. We consider a set $A \subset \X$ and a pair of points such that it $A$ has the property that every rectifiable curve that travels from one point to the other crosses $A$. Then every level set of the position function intersected with $A$ has the same property.

\begin{proposition}[{\cite[Proposition 5.3]{CaputoCavallucci2024II}}]
\label{prop:level_set_position_function}
        Let $(\X,\sfd)$ be a metric space and let $x,y\in \X$. Let $A$ be closed such that $A \cap \gamma \neq \emptyset$ for all  $\gamma \in \Gamma_{x,y}$. Then the sets $\{z\in \X \, : \,\pos_A(z) = t \} \cap A$ have the property that $\{z\in \X \, : \,\pos_A(z) = t \} \cap A \cap \gamma \neq \emptyset$ for all  $\gamma \in \Gamma_{x,y}$ for all $t \in [0,\width_{x,y}(A)]$.
\end{proposition}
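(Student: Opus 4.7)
The plan is to study the one-variable function $\tau\colon[0,1]\to[0,\infty)$, $\tau(s):=\pos_A(\gamma(s))$, for a fixed $\gamma\in\Gamma_{x,y}$, and to conclude by an intermediate-value argument. Throughout I write $\phi_\gamma(s):=\ell(\gamma\cap A,s)$ for the continuous nondecreasing length-in-$A$ profile of $\gamma$.

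The first and crucial step is to prove the Lipschitz-type estimate $|\tau(s)-\tau(s')|\le|\phi_\gamma(s)-\phi_\gamma(s')|$ for every $s,s'\in[0,1]$, which implies that $\tau$ is continuous. The key geometric input is that the piece of $\gamma$ between $\gamma(s)$ and $\gamma(s')$, traversed in whichever direction is needed, is a rectifiable curve joining them whose length in $A$ equals exactly $|\phi_\gamma(s)-\phi_\gamma(s')|$. Given $\varepsilon>0$ and $\gamma'\in\Gamma_{x,y}$ with $\gamma'(\sigma)=\gamma(s')$ and $\ell(\gamma'\cap A,\sigma)\le\pos_A(\gamma(s'))+\varepsilon$, one replaces the tail $\gamma'|_{[\sigma,1]}$ by the concatenation of this piece of $\gamma$ with $\gamma|_{[s,1]}$. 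The resulting curve lies in $\Gamma_{x,y}$, passes through $\gamma(s)$, and witnesses $\pos_A(\gamma(s))\le\pos_A(\gamma(s'))+|\phi_\gamma(s)-\phi_\gamma(s')|+\varepsilon$. Letting $\varepsilon\to 0$ and symmetrising in $s,s'$ gives the estimate, and the continuity of $\phi_\gamma$ then forces that of $\tau$.

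Now set $s_0:=\min\gamma^{-1}(A)$ and $s_1:=\max\gamma^{-1}(A)$, both well defined since $A$ is closed and $A\cap\gamma\neq\emptyset$ by hypothesis. Since $\gamma|_{[0,s_0]}$ meets $A$ only at its endpoint, $\tau(s_0)=0$. Symmetrically, denoting by $\pos_A^y$ the analogue of $\pos_A$ with the roles of $x$ and $y$ swapped, the tail $\gamma|_{[s_1,1]}$ gives $\pos_A^y(\gamma(s_1))=0$. For $z\in A$, concatenating near-optimal curves from $x$ to $z$ and from $z$ to $y$ produces a path in $\Gamma_{x,y}$ through $z$ whose length in $A$ is at least $\width_{x,y}(A)$, giving the subadditivity $\pos_A(z)+\pos_A^y(z)\ge\width_{x,y}(A)$. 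Applied at $z=\gamma(s_1)$ this gives $\tau(s_1)\ge\width_{x,y}(A)$, and the intermediate value theorem produces, for every $t\in[0,\width_{x,y}(A)]$, some $s^*\in[s_0,s_1]$ with $\tau(s^*)=t$.

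It remains to arrange $\gamma(s^*)\in A$. If it already is, we are done; otherwise $s^*$ lies in a connected component $(a,b)$ of the open set $[s_0,s_1]\setminus\gamma^{-1}(A)$, on which $\phi_\gamma$ is constant. By the Lipschitz estimate of the first step, $\tau$ is then constant on $[a,b]$, so $\tau(a)=t$; and $a\in\gamma^{-1}(A)$ by closedness, so $z:=\gamma(a)\in A\cap\gamma$ satisfies $\pos_A(z)=t$. I expect the main obstacle to be the continuity of $\tau$: as a function on $\X$, $\pos_A$ is a rather non-local object and can a priori be very irregular, and the argument crucially exploits that on the image of the fixed $\gamma$ the curve itself serves as a canonical interpolating path whose length-in-$A$ profile is already the continuous function $\phi_\gamma$.
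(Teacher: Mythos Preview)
This survey does not include a proof of the proposition --- it is only stated, with a reference to \cite{CaputoCavallucci2024II} --- so there is no in-text argument to compare your proposal against. Evaluated on its own, your proof is correct and complete: the path-splicing estimate $|\tau(s)-\tau(s')|\le|\phi_\gamma(s)-\phi_\gamma(s')|$ gives continuity of $\tau=\pos_A\circ\gamma$ along the fixed curve, the subadditivity $\pos_A(z)+\pos_A^y(z)\ge\width_{x,y}(A)$ combined with $\pos_A^y(\gamma(s_1))=0$ supplies the endpoint value $\tau(s_1)\ge\width_{x,y}(A)$, and the slide from $s^*$ to the nearest endpoint of the component of $[s_0,s_1]\setminus\gamma^{-1}(A)$ (on which $\phi_\gamma$, hence $\tau$, is constant) places the witness inside $A$.

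One cosmetic remark: the subadditivity inequality you invoke actually holds for every $z$ lying on some curve in $\Gamma_{x,y}$, not only for $z\in A$ --- the concatenation of near-optimal initial and final segments is a competitor in $\Gamma_{x,y}$ regardless of whether $z\in A$. Since you apply it only at $\gamma(s_1)\in A$, this does not affect the argument.
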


\begin{figure}[h!]
    \centering
    \includegraphics[scale=0.6]{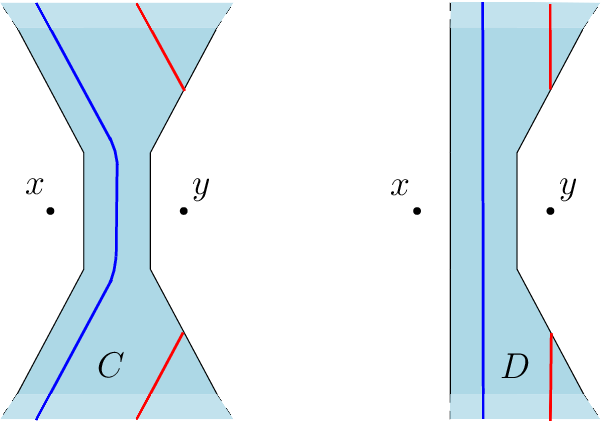}
    \caption{Fix the pair of points $x,y$ in the plane. Then consider the unbounded sets $C$ and $D$. Both of them satisfy the assumption of Proposition \ref{prop:level_set_position_function}. The blue lines represent a level set of the position function for $t \in [0,\width(A)]$, while the red one for $t > \width(A)$.}
    \label{fig:enter-label}
\end{figure}

Next, we can finely quantify the regularity of the position function in terms of the metric connectivity properties of the ambient space as follows.

\begin{proposition}[{\cite[Proposition 5.4]{CaputoCavallucci2024II}}]
\label{prop:position_local_properties}
        Let $(\X,\sfd)$ be a rectifiable path connected metric space and let $x,y\in \X$. Let $A \subseteq \X$ Borel. Then 
        \begin{itemize}
            \item[(i)] $\pos_A(z) < +\infty$ for every $z\in \X$;
            \item[(ii)] if $\X$ is pointwise $\Lambda_z$-quasiconvex at $z\in \X$ then $\lip \,\pos_A(z) \leq \Lambda_z$;
            \item[(iii)] if $\X$ is pointwise quasiconvex then $\pos_A$ is continuous and $\lip\,\pos_A = 0$ on $\overline{A}^c$;
            \item[(iv)] if $\X$ is (locally) $\Lambda$-quasiconvex then $\pos_A$ is (locally) $\Lambda$-Lipschitz.
        \end{itemize}
\end{proposition}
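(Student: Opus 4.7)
All four items follow from a single ``detour and return'' construction that exploits the infimum in the definition of $\pos_A$. I would first dispatch (i), then prove the core estimate (ii), from which (iii) and (iv) fall out with little extra work.

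For (i), since $(\X,\sfd)$ is rectifiable path connected, pick $\gamma_1 \in \Gamma_{x,z}$ and $\gamma_2 \in \Gamma_{z,y}$; their concatenation $\gamma \in \Gamma_{x,y}$ passes through $z$ at some parameter $s$, so $\pos_A(z) \le \pos_{\gamma,A}(z) \le \ell(\gamma_1 \cap A) \le \ell(\gamma_1) < \infty$.

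For (ii), fix $z$ where $\X$ is pointwise $\Lambda_z$-quasiconvex on $B_{r_z}(z)$, fix $w \in B_{r_z}(z)$, and fix $\varepsilon > 0$. Choose a near-optimal $\gamma \in \Gamma_{x,y}$ and $s \in \gamma^{-1}(z)$ with $\ell(\gamma \cap A, s) \le \pos_A(z) + \varepsilon$, together with a quasigeodesic $\eta \in \Gamma^{\Lambda_z}_{z,w}$ (so $\ell(\eta) \le \Lambda_z \sfd(z,w)$). Form the concatenation
\begin{equation*}
    \gamma' \,:=\, \gamma|_{[0,s]} \ast \eta \ast \overline{\eta} \ast \gamma|_{[s,1]},
\end{equation*}
where $\overline{\eta}$ is the reversal of $\eta$. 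Then $\gamma' \in \Gamma_{x,y}$ and its first visit to $w$ occurs at the end of the $\eta$-segment, whence
\begin{equation*}
    \pos_A(w) \le \pos_{\gamma',A}(w) \le \ell(\gamma \cap A, s) + \ell(\eta \cap A) \le \pos_A(z) + \varepsilon + \Lambda_z \sfd(z,w).
\end{equation*}
Sending $\varepsilon \to 0$ and running the symmetric argument at $w$ (using $\overline{\eta}$ to detour to $z$ and $\eta$ to return, both of length $\le \Lambda_z \sfd(z,w)$) yields $|\pos_A(w) - \pos_A(z)| \le \Lambda_z \sfd(z,w)$ for every $w \in B_{r_z}(z)$. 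Dividing by $\sfd(z,w)$ and taking the $\limsup$ as $w \to z$ gives $\lip \pos_A(z) \le \Lambda_z$.

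Items (iii) and (iv) drop out of the same estimate. Pointwise quasiconvexity at every point, together with (ii), makes $\pos_A$ locally Lipschitz and hence continuous. If moreover $z \notin \overline{A}$, choose $\rho > 0$ with $B_\rho(z) \cap A = \emptyset$; for $w$ close enough that $\Lambda_z \sfd(z,w) < \rho$, the image of $\eta$ is contained in $B_\rho(z)$ and thus disjoint from $A$, so $\ell(\eta \cap A) = 0$ and the detour contributes nothing. The inequality degenerates to $\pos_A(w) = \pos_A(z)$ for all nearby $w$, whence $\lip \pos_A(z) = 0$. For (iv), global $\Lambda$-quasiconvexity lets me repeat the construction with $\Lambda_z$ replaced by $\Lambda$ for every pair of points, producing a genuine $\Lambda$-Lipschitz bound; under local $\Lambda$-quasiconvexity the same bound holds whenever $z,w$ lie in a common ball $B_{r_x}(x)$, which is exactly local $\Lambda$-Lipschitzness.

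The main subtlety, and the step I would write most carefully, is verifying that the first preimage of $w$ under $\gamma'$ is indeed the endpoint of the $\eta$-segment, so that the accumulated length-in-$A$ up to that parameter is at most $\ell(\gamma \cap A, s) + \ell(\eta \cap A)$ and nothing larger. The return arc $\overline{\eta}$ guarantees this: after $\eta$ reaches $w$, $\gamma'$ retraces $\eta$ back to $z$ before heading to $y$, so later preimages of $w$ along $\gamma'$ can only increase the accumulated length (the map $s' \mapsto \ell(\gamma' \cap A, s')$ is nondecreasing), and taking the infimum over $\gamma'^{-1}(w)$ in the definition of $\pos_{\gamma',A}(w)$ recovers the claimed bound. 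Rectifiability of $\gamma'$ is immediate from $\ell(\gamma') = \ell(\gamma) + 2\ell(\eta) < \infty$.
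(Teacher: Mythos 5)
Your detour--and--return argument is correct and is the natural proof of this proposition: the survey itself only cites \cite[Proposition 5.4]{CaputoCavallucci2024II} without reproducing the argument, and the concatenation $\gamma|_{[0,s]} \ast \eta \ast \overline{\eta} \ast \gamma|_{[s,1]}$ combined with the monotonicity of $s' \mapsto \ell(\gamma' \cap A, s')$ and the infimum over preimages is exactly the right mechanism for all four items. You also correctly identify and handle the one real subtlety (that the endpoint of the $\eta$-segment need not be the first preimage of $w$, which is harmless precisely because $\pos_{\gamma',A}(w)$ is an infimum over all preimages), so nothing is missing.
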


The last proposition is the crucial tool that we need, in combination with a coarea inequality for Lipschitz functions, to prove Theorem \ref{thm:main_theorem_CC2}.

\begin{proof}[Sketch of the proof of Theorem \ref{thm:main_theorem_CC2}]
    The second inequality can be proven as in (iii)$\Rightarrow$(iv) in Theorem \ref{theo:main-intro-p=1}.
    We sketch the first inequality. Since $\Gamma_{x,y} \neq \emptyset$, $\width_{x,y}(A) < +\infty$ for every $A\subset \X$. Fix a closed set $A \subset \X$. We can suppose $\width_{x,y}(A) > 0$, otherwise $\SR_{x,y}(A) = +\infty$ and there is nothing to prove. We consider the function $\pos_A$ that is $\Lambda$-Lipschitz on $\X$ as a combination of item (iv) in Proposition \ref{prop:position_local_properties} and the fact that ${\rm supp}(\mm_{x,y}^L)$ is compact.  

    We notice that, for $t\in (0,\width_{x,y}(A))$, the sets $\{\pos_A \le t\}$ belong to $\SS_{\text{top}}(x,y)$ because $\pos_A$ is Lipschitz, $\pos_A(x) = 0$, $\pos_A(y) = \width_{x,y}(A)$.
    
    By the coarea inequality and items (iii),(iv) in Proposition \ref{prop:position_local_properties}, we have
    \begin{equation*}
        \int_0^{\width_{x,y}(A)} (\mm_{x,y}^L)^+(\{ \pos_A \le t \}) \,\d t \le \int \lip \,\pos_A \,\d \mm_{x,y}^L = \int_A \lip \,\pos_A \,\d \mm_{x,y}^L \le \Lambda \mm_{x,y}^L(A).
    \end{equation*}
    Dividing by $\width_{x,y}(A)$ we conclude that we can find some $\bar{t}\in (0,\width_{x,y}(A))$ such that
    $$(\mm_{x,y}^L)^+(\{\pos_A\}) \le \bar{t}\}) \le \Lambda \frac{\mm_{x,y}^L(A)}{\width_{x,y}(A)} = \Lambda\cdot \SR_{x,y}(A).$$
    By the arbitrariness of $A$ and since $\{\pos_A \le \bar{t}\} \in {\rm SS}_{\rm top}(x,y)$, we finally have
    $$\frac{1}{\Lambda}\inf_{\Omega \in \SS_{\textup{top}}(x,y)} (\mm_{x,y}^L)^+(\Omega) \le \inf_{A \subseteq \X\,\textup{closed}} \frac{\mm_{x,y}^L(A)}{\width_{x,y}(A)}.$$
    
\end{proof}

\section{Open problems}
\label{sec:open}
We give some examples of open problems. One of the reason to develop all the machinery in \cite{CaputoCavallucci2024,CaputoCavallucci2024II} is to prove that two classes of metric measure spaces with bound on the curvature satisfy the Poincar\'{e} inequality.

\subsection{\texorpdfstring{${\sf MCP}(0,N)$}{MCP} spaces}
The first class is the one of metric measure spaces that satisfy a measure contraction property ${\sf MCP}(K,N)$ in the sense of Ohta-Sturm (\cite{Ohta07,SturmII}), where $K \in \mathbb{N}$ is a lower bound on the curvature and $N > 1$ is an upper bound on the dimension. This class generalizes the one of ${\sf CD}(K,N)$ spaces, but it has the positive side of including subRiemannian and subFinslerian structures, as the Heisenberg group (\cite{Jui09}), some Carnot groups (\cite{Riz16}) and the sub-Finslerian Heisenberg group (\cite{BorzaMagnaboscoRossiTashiroI}).
We restrict the study to the class ${\sf MCP}(0,N)$ space, because they are doubling metric measure spaces. One can change the axiomatization of doubling and Poincar\'{e} to local statements to study ${\sf MCP}(K,N)$ for negative $K$. 
Tapio Rajala raised the question if ${\sf MCP}(0,N)$ spaces are ${\sf PI}$ spaces.
Eriksson-Bique proved in \cite[Theorem 1.3]{ErikssonBique2019} that they satisfy a $p$-Poincar\'{e} inequality if $p > N+1$, meaning the $L^p$-norm of $\lip u$ replaces the $L^1$ norm of $\lip u$ on the right-hand side. 
It is not known whether ${\sf MCP}(0,N)$ are ${\sf PI}$ spaces, so it is not clear if they satisfy a $1$-Poincar\'{e} inequality.

\subsection{ \texorpdfstring{${\sf GCBA}(\kappa)$}{GCBA} spaces}
The second class is that of metric spaces with an upper bound on the curvature. More specifically, Lytchak and Nagano in \cite{LN19} studied the local geometry of the class of complete and separable, locally compact, metric space such that
\begin{itemize}
    \item every geodesic can be extended to a geodesic defined on a larger interval. This metric property is called \emph{geodesic completeness};
    \item the curvature is bounded from above by $\kappa \in \mathbb{R}$ in the following sense. Triangles in the metric space are thinner than the triangles with same edge length in the two dimensional model space of constant sectional curvature $\kappa$.
\end{itemize}

These are called called ${\sf GCBA}(\kappa)$ spaces, that stands for Geodesically complete spaces with Curvature Bounded Above.
These spaces are natural generalizations of smooth Riemannian manifolds (with no boundary) with curvature bounded above.

At this level of generality, a space in this class does not have a fixed dimension; indeed, one can define simple examples of varying dimension by gluing a segment with a sphere. Nevertheless, we restrict to the case of $n$-dimensional spaces for $n \in \mathbb{N}$, that means that every point has a small ball around it of Hausdorff dimension $n$. The $n$-dimensional Hausdorff measure on a $n$-dimensional ${\sf GCBA}$ space is (locally) doubling, as proved in \cite{CavSam22}. 
It is natural to ask whether, after the previous restriction, a $n$-dimensional ${\sf GCBA}$ space is a ${\sf PI}$ space.
This is not the case as the following simple examples show. A first observation is that the ${\sf GCBA}$ is stable under a gluing procedure. Given two ${\sf GCBA}(\kappa)$ spaces $\X$ and $\Y$ and a geodesically convex $A\subset \X$ that is isometric to $\tilde{A}\subset \Y$. The gluing of such spaces along $A$ is still a ${\sf GCBA}(\kappa)$ space.

Let $d\in \mathbb{N}$ and consider two copies of the Euclidean space $\mathbb{R}^d$, which are ${\sf GCBA}(0)$ spaces. For $1\le k < d$, we glue them along a convex $k$-dimensional set (for $k=1$ one can consider a segment, for $k=2$ a $2$-dimensional square and so on).
The resulting space satisfying a $p$-Poincar\'{e} inequality if and only if $p > d-k$.

We refer the reader to \cite[Section 6.14]{HeiKos98} for the discussion about the definition of a new metric measure space with a gluing procedure and the validity of the Poincar\'{e} inequality on the glued space.

\begin{figure}[h!]
        \label{fig:quotient_R2}
        \includegraphics[scale=0.7]{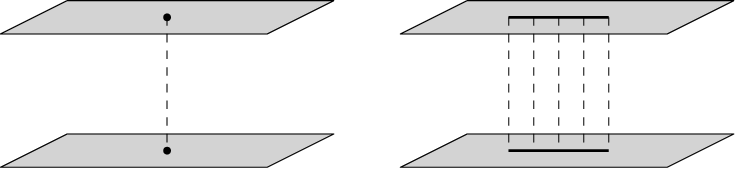}
        \caption{Example of gluings in the 2-dimensional case.}
\end{figure}

Therefore, a more natural question would be to find a geometric condition on a $n$-dimensional ${\sf GCBA}$ space which characterizes the validity of a $1$-Poincar\'{e} inequality.

\bibliographystyle{alpha}
\bibliography{biblio.bib}
\end{document}